\documentclass[a4paper]{article}
\usepackage[affil-it]{authblk}
\usepackage[authoryear]{natbib}
\usepackage{orcidlink}
\newenvironment{frontmatter}{}{}

\let\address\affil
\newenvironment{keyword}{\small \textbf{Keywords: }}{}

\usepackage{amssymb, amsmath, amsfonts,amsthm}
\usepackage{graphicx}
\usepackage{xcolor}
\usepackage{esvect}
\usepackage{hyperref}
\usepackage{todonotes}
\usepackage{xspace}
\usepackage{subcaption}
\usepackage{multirow}
\usepackage[lined,algoruled,commentsnumbered,noend]{algorithm2e}
\usepackage{marvosym}%for corresponding author letter
\usepackage[normalem]{ulem} %for \sout
\usepackage{graphicx} % Required for inserting images
\usepackage{fullpage}

\newcommand{\Z}{\mathbb{Z}}
\newcommand{\R}{\mathbb{R}}

\DeclareMathOperator{\st}{\mbox{s.t. }}

\DeclareMathOperator{\intset}{int}

\newcommand{\cD}{\mathcal{D}}
\newcommand{\cP}{\mathcal{P}}
\newtheorem{Theorem}{Theorem}
\newtheorem{Assumption}[Theorem]{Assumption}

\newtheorem{Lemma}[Theorem]{Lemma}
\newtheorem{Proposition}[Theorem]{Proposition}
\newtheorem{Corollary}[Theorem]{Corollary}

\newcommand{\VFR}{VFR\xspace}
\newcommand{\SLR}{SLR\xspace}

\newcommand{\BC}{B\&C\xspace}

\def\yy{\hat{y}}

\newcommand{\DCs}{DCs\xspace}
\newcommand{\DC}{DC\xspace}

\newcommand{\textoverline}[1]{$\overline{\mbox{#1}}$}
\newcommand{\ccrSLR}{\textoverline{\SLR}\xspace}

\makeatletter
\renewcommand*{\top}{%
{\mathpalette\@transpose{}}%
}
\newcommand*{\@transpose}[2]{%
\raisebox{\depth}{$\m@th#1\scriptscriptstyle\mathsf{T}$}%
}
\makeatother

\newcommand{\Set}[1]{\left\{#1\right\}}

\newcommand{\Defset}[3][\defsep]{\Set{#2#1#3}}

\begin{document}
\begin{frontmatter}

\title{On solving integer bilevel optimization problems with a non-convex quadratic follower objective function using disjunctive cuts}

\makeatletter
\def\blfootnote{\xdef\@thefnmark{}\@footnotetext}
\makeatother
\blfootnote{
		\textit{Email addresses:} 
		\texttt{ktaninmis@ku.edu.tr} (Kübra Tanınmış),
        \texttt{elisabeth.gaar@uni-a.de} (Elisabeth Gaar),
        \texttt{jonxlee@umich.edu} (Jon Lee),
        \texttt{ivana.ljubic@wu.ac.at} (Ivana Ljubi\'c), 
        \texttt{markus.sinnl@jku.at} (Markus Sinnl)
        }
        
\author[1]{K\"ubra Tan{\i}nm{\i}\c{s}\,\orcidlink{0000-0003-1081-4182}}
\author[2]{Elisabeth Gaar\,\orcidlink{0000-0002-1643-6066}}
\author[3]{Jon Lee\,\orcidlink{0000-0002-8190-1091}}
\author[4]{Ivana Ljubi\'c\,\orcidlink{0000-0002-4834-6284}}
\author[5]{Markus Sinnl\,\orcidlink{0000-0003-1439-8702}}

\address[1]{\small Department of Industrial Engineering, Ko\c{c} University, \.{I}stanbul, Turkey}
\address[2]{Institute of Mathematics, University of Augsburg, Augsburg, 
Germany}
\address[3]{University of Michigan, Ann Arbor, Michigan, USA}
\address[4]{Institute for Statistics and Mathematics, Vienna University of Economics and Business,  Vienna, Austria}
\address[5]{Institute of Business Analytics and Technology Transformation/JKU Business School, Johannes Kepler University Linz, Linz, Austria}
\date{}

\maketitle

\begin{abstract}
In this work, we study bilevel optimization problems where all variables are integer, all constraints and the leader objective function are linear, and the follower objective function is non-convex quadratic. Relying on bilevel-free sets derived from improving directions, we develop a disjunctive cut approach to exclude bilevel-infeasible solutions within a branch-and-cut algorithm. We show that our disjunctive cuts can be obtained by solving a cut generating linear program. 
Furthermore,  we discuss conditions that allow the number of disjuncts in the cut generating linear program to be reduced, and we propose several strategies to identify improving directions and generate disjunctive cuts efficiently. 

We evaluate various aspects of the proposed branch-and-cut algorithm on both convex instances from the literature that fit our setting and new non-convex instances and compare the performance of our best approach with existing state-of-the-art approaches, which we significantly outperform. 
\end{abstract}

\begin{keyword}
bilevel optimization; branch-and-cut; nonlinear optimization 
\end{keyword}

\end{frontmatter}

%\date{May 2023}

%%%%%%%%%%%%%

\section{Introduction}
Bilevel optimization problems (BOPs) arise naturally in settings where two decision makers act sequentially and hierarchically, with the so-called \emph{leader} making decisions in the \emph{upper-level} in anticipation of a \emph{follower's} optimal response in the \emph{lower-level}. Such structures appear across a wide range of application domains, including energy market design, network interdiction, transportation planning, or pricing; we refer to recent surveys by~\citet{kleinert2021survey,Beck-et-al:2022,Caselli-et-al:2026} for introductions to the field.
From a computational standpoint, BOPs are notoriously difficult. Already the linear-linear case, where both the leader and follower problem are continuous linear programs (LPs), is strongly NP-hard~\citep{Hansen-et-al:1992}.  The complexity increases fundamentally when convexity of the follower problem cannot be assumed.
A particularly well-studied class of BOPs beyond the convex setting is that of integer linear BOPs, where non-convexity at the follower problem enters through integrality requirements. These BOPs are $\Sigma_2^p$-hard in general~\citep{jeroslow1985polynomial}, and solution approaches typically rely on branch-and-cut frameworks that alternate between branching on integer variables and generating cutting planes to remove solutions that are integer-feasible but violate bilevel-feasibility. Starting from the foundational work of~\citet{denegre2009branch}, the field has developed rapidly, with contributions including the watermelon algorithm~\citep{watermelon}, intersection cuts~\citep{fischetti2016intersection,fischetti2017new,Battista-Ralphs:2025}, and various families of valid inequalities~\citep{Tahernejad-Ralphs:2025}, see also the recent book \citep{Beck-Ljubic-Schmidt:2026} for a general overview.

In this paper, we study a class of BOPs that has so far received comparatively little attention: problems where the leader problem is an integer LP (ILP) and the follower minimizes a quadratic objective function, which we do \emph{not} assume to be convex, over a set of integer variables subject to linear constraints, i.e., the follower problem is a quadratic ILP (QILP). 
We refer to this class as ILP-QILP BOPs. 
More precisely, we consider the following \emph{optimistic} BOP
\begin{subequations}
\label{bilevelProblem}
\begin{alignat}{3}
\min_{(x,y)\in \R^n} ~ c^\top x + d^\top y \\
\st  Mx+Ny &\geq h \label{MxNy}\\
x &\in \mathbb{Z}^{n_1} \\
y &\in \Omega(x),  
\end{alignat}
\end{subequations}
where $\Omega(x)$ is the set of globally 
  optimal solutions to the $x$-parameterized \emph{follower} (or \emph{lower-level}) problem
  \begin{subequations}
\label{llProblem}
\begin{alignat}{3} 
\min_{y\in \R^{n_2}} ~ q(y) \qquad  \\
\st  A x+ B y &\geq f \label{general}\\
 \ell \le y &\le u \label{ybounds} \\
 y &\in \mathbb{Z}^{n_2} 
\end{alignat}
\end{subequations}
and the parameters 
$c \in \R^{n_1}$, 
$d \in \R^{n_2}$, 
$M \in \R^{m_1 \times n_1}$, 
$N \in \R^{m_1 \times n_2}$, 
$h \in \R^{           m_1}$, 
$A \in \Z^{m_2 \times n_1}$, 
$B \in \Z^{m_2 \times n_2}$,  
$f \in \Z^{m_2 }$, 
$\ell,u\in\Z^{n_2}$, and
$n_1, n_2 \in \Z^+$ are given.
Here, the decision variables $x$ and $y$ are integer vectors of dimension $n_1$ and $n_2$, respectively, $n=n_1+n_2$ and $q(y)$ is a quadratic function of the form 
\[ q(y) = y^\top R y + g^\top y \]
with $R\in \mathbb{Z}^{n_2\times n_2}$, with assuming that $R$ is symmetric without loss of generality, and $g \in \mathbb{Z}^{n_2}$.
We refer to the constraints
\eqref{general} as \emph{lower-level constraints}, and the subset of~\eqref{general} in which some leader variable appears with a non-zero coefficient as \emph{linking constraints}. 
Note that any polyhedral constraints on $y$ within the follower problem can be represented within~\eqref{general} by setting the entries in the corresponding row of $A$ to all zeros. While \emph{bound constraints}~\eqref{ybounds} could also be included in~\eqref{general} we write them down explicitly, as they play a central role in the derivation of our \emph{disjunctive cuts} (DCs).
Likewise, all polyhedral constraints on $x$ are assumed to be included in~\eqref{MxNy} (in which case all entries in the corresponding rows of $N$ are equal to zero). Throughout the paper we will denote the $i$-th row of any matrix $Z$ as $Z_i$ and the entry of $Z$ in the $i$-th row and $j$-th column as $Z_{ij}$. 

The non-convexity of the quadratic follower objective function $q(y)$ of ILP-QILPs has significant consequences and brings an additional layer of difficulty. Even if the other substantial source of difficulty, the integrality requirements of the follower variables $y$, were present, the non-convexity rules out the use of KKT-based or duality-based single-level reformulations, since the KKT conditions are no longer sufficient for global optimality of the follower problem. It also means that, for a fixed leader decision, finding the optimal follower response is itself still an NP-hard problem, even without integer $y$. 

%These two sources of difficulty (combinatorial follower decisions together with a potentially non-convex objective function) make this problem class substantially harder than MILP-MILP from a computational point-of-view.

\subsection{Previous and related work}

Existing work on BOPs with non-convex integer follower problems is sparse. General-purpose frameworks such as those of~\citet{Mitsos:2010} and~\citet{Kleniati-Adjiman:2014a,Kleniati-Adjiman:2014b,Kleniati-Adjiman:2015} can in principle handle such problems, but their broad applicability comes at a significant computational cost, limiting their practical reach to instances with very few variables and constraints. At the other end of the spectrum, recent work on integer nonlinear BOPs~\citep{Gaar-et-al:2024} has demonstrated that exploiting DCs can lead to algorithms that scale to much larger instances, but this line of work assumes the objective function of the follower problem is convex quadratic. A very recent branch-and-cut approach by~\cite{Horlaender-et-al:2025} also uses DCs to solve a larger family of integer nonlinear BOPs in which the leader and follower problems can contain nonlinear constraints as well. 
Contrary to traditional disjunctive cut approaches where DCs are derived from the convex hull of a disjunctive
set, \citet{Horlaender-et-al:2025} solve a series of subproblems where each of them is optimized
over one disjunct intersected with the integer lattice to obtain DCs.
While this framework is capable of handling a more general problem class, its generality comes at a significant computational cost, as it is not tailored to exploit the structure present in our setting, namely the polyhedral constraints at the follower and a quadratic objective function. 

Regarding non-convex follower problems without integrality requirements on the variables, there is more existing literature. For example, \citet{Bomze-et-al:2025} recently consider BOPs with a non-convex quadratic follower problem, proving $\Sigma^p_2$-hardness of this class and proposing an iterative lower- and upper-bounding scheme that is shown to be finitely convergent. 
We refer the interested reader to \cite{Bomze-et-al:2025} and the references therein for more information regarding such problems without integrality requirements on the variables.

To the best of our knowledge, no algorithm specifically designed for ILP-QILP BOPs with a non-convex follower objective function exists in the literature.
We address this gap by developing a novel branch-and-cut (\BC) solution framework based on DCs. We exploit  the algebraic structure of quadratic follower problems to derive polyhedral sets that contain no bilevel-feasible solutions in their interior. We denote such sets as bilevel-free. Contrary to the previous study by~\cite{Gaar-et-al:2024} where the follower objective function was a convex quadratic function, and the DCs were derived using a second-order cone programming (SOCP) based cut generating procedure, our new results show that DCs can be generated using LPs, even when the quadratic objective function of the follower is non-convex. 

\subsection{Contribution and Outline}

The main contribution of this paper is the development of an exact solution algorithm for ILP-QILP BOPs, namely a \BC algorithm based on DCs. We also present enhancements of this algorithm and a computational study, which shows that our algorithm outperforms state-of-the-art algorithms for problems with convex objective function and is also competitive for problems with non-convex objective (in the latter case, we compare with a solver which can tackle ILPs in the follower after using standard linearization techniques for the non-convex objective function).

The remainder of the paper is organized as follows: In Section~\ref{sec:cuts} we describe our bilevel-free set which is based on improving directions, and how to derive DCs based on it using a cut generating LP. We also describe how these cuts can be used to obtain a \BC algorithm to solve~\eqref{bilevelProblem}.
In Section~\ref{sec:separation} we are concerned with obtaining improving directions which we need to obtain cuts. We also discuss techniques to detect and remove empty disjuncts to enlarge our bilevel-free set. In Section~\ref{sec:implementation} we describe implementation details. Our computational study is presented in Section~\ref{sec:results} and Section~\ref{sec:conclusion} concludes the paper.

%%%%%%%%%%%%%

\section{Disjunctive cuts for improving-direction bilevel-free sets}\label{sec:cuts}

In this section we first present basic notation in Section~\ref{sec:notation} and \emph{improving-solution bilevel-free sets} used in earlier work in Section~\ref{sec:GaarRecap}, before introducing \emph{improving-direction bilevel-free sets} for our setting in Section~\ref{sec:bilevelFree}. We then show how to use disjunctive arguments to derive \emph{improving-direction DCs} and present the cut generating LP in Section~\ref{sec:CGLP}.
Finally, we present our \BC algorithm in Section~\ref{sec:BC}, where we also show that we are guaranteed to obtain a violated cut for any bilevel-infeasible solution under a mild condition that is always satisfied in our case, and where we establish finite convergence of the \BC.

 %%%%%%%%%%%%%

\subsection{Basic definitions and assumptions}
\label{sec:notation}
 
The optimistic BOP~\eqref{bilevelProblem}
has the so-called \emph{value function reformulation} (\VFR)
\begin{subequations}\label{vfr}
	\begin{alignat}{3}
	\min_{(x,y) \in \mathbb Z^n}  ~c^\top x + d^\top y \\%~\ell(x,y) \\
	\st~ Mx+Ny &\geq h \label{ineq:linear}\\
	  Ax+By &\geq f \label{ineq:linking}\\
     \ell \leq y &\leq u \label{ineq:ybounds}\\
	q(y) &\leq \Phi(x), \label{eq:inequValueFunction}
%	&(x,y) \in \mathbb Z^n,	\label{ineq:xyint}
	\end{alignat}
\end{subequations}
where the \emph{value function} $\Phi(x)$ of the follower problem
\begin{equation*}
\Phi(x) = {\rm min}\left\{
q(y): A x+ B y \geq f,~ \ell \le y \le u,~ y \in \mathbb{Z}^{n_2} 
%Ax+By \geq f,~  y \in \mathcal{Y}, 
%~ y \in \mathbb Z^{n_2}
%~y_j \in \mathbb Z \quad \forall j \in J
\right\} %\label{eq:follower}
\end{equation*}
is typically non-convex and non-continuous.   

Starting from the \VFR~\eqref{vfr}, we obtain the \emph{single-level relaxation} (\SLR) by 
dropping~\eqref{eq:inequValueFunction}, i.e., the optimality condition on $y$ 
for the follower problem. 
We denote the LP relaxation (i.e., replacing the integer constraint $(x,y) \in \Z^n$ with the corresponding variable bound constraints)
of the \SLR as \ccrSLR.
Given $x \in \Z^{n_1}$, let 
$$Y(x)=\{y \in \Z^{n_2}: B y \ge f-Ax,~\ell \le y \le u\}$$
be the feasible region of the follower problem for this leader decision~$x$. Moreover, let $$\mathcal F=\{(x,y) \in \Z^n: Mx + Ny \ge h,~y \in Y(x),~q(y) \leq \Phi(x)\}$$ be the feasible region of the \VFR (i.e., equivalently, $\mathcal F$ is the feasible region of our original problem~\eqref{bilevelProblem}). 

A solution $(x,y) \in \R^n$ is called \emph{(bilevel-)feasible} if it is feasible for the original problem~\eqref{bilevelProblem} (thus, if it is in  $\mathcal F$), otherwise it is called \emph{infeasible}. Additionally, a solution $(x,y) \in \R^n$ is called \emph{bilevel-infeasible} if it is feasible for the \SLR, but violates~\eqref{eq:inequValueFunction}, so bilevel-infeasibility is a special case of infeasibility. In particular, if a solution is not feasible for the SLR, it is neither bilevel-feasible nor bilevel-infeasible. We note that these terms are not used uniformly in the literature and our definition of bilevel-infeasibility follows \citet{fischetti2017new}, whereas, e.g., \citet{Gaar-et-al:2024} call every solution that is not bilevel-feasible bilevel-infeasible.
A set $C\subseteq \mathbb R^n$ is called \emph{bilevel-free} if it does not contain any bilevel-feasible solution.

Similar to \citet{fischetti2016intersection,fischetti2017new,Gaar-et-al:2024}, our aim is to design a \BC which starts with solving the \SLR and where at any time a bilevel-infeasible solution is encountered, a DC is generated to cut off this solution. Note that this means we only need to be able to guarantee that we find a cut when we encounter a solution $(x^*,y^*)$ which is integer during the \BC as continuous solutions are not feasible to the \SLR per definition (and thus cannot be bilevel-infeasible). Standard branching within the \BC deals with the integrality constraints. Nevertheless, our DCs may also be separated when $(x^*,y^*)$ is fractional, in this case the separation procedure is a heuristic. 

We make the following standard assumption for the rest of the paper (see e.g., \citet{Gaar-et-al:2024} for a similar assumption).
\begin{Assumption}\label{as:bounded}
The feasible region of \ccrSLR is a bounded polyhedron.
\end{Assumption}

%%%%%%%%%%%%%

\subsection{Improving-solution bilevel-free sets and disjunctive cuts}
\label{sec:GaarRecap}

Before introducing our new bilevel-free sets which will be the starting point to deriving our new DCs, we first describe the bilevel-free sets and DCs used in our earlier work~\citep{Gaar-et-al:2024}. The structure of the problem considered in that work is slightly different: while its leader problem is more general with also including conic constraints, its follower objective function is substantially more limited by allowing only convex quadratic objective functions by restricting $R$ to be positive semidefinite. Furthermore, while we explicitly have bound constraints $\ell \le y \le u$ in the follower problem, they only have the constraint that $y$ must be in a polyhedron.

In particular, the approach of~\citet{Gaar-et-al:2024} is as follows. Given a bilevel-infeasible solution $(x^*,y^*)$ to be cut off, the construction of the proposed bilevel-free set starts from an \emph{improving solution} $\hat y \in Y(x^*)$, i.e., a follower solution that is feasible for the follower problem at $x^*$ and strictly better than $y^*$ (typically, an optimal follower solution for the follower problem at $x^*$). The corresponding \emph{improving-solution bilevel-free set}
 is defined as
\[
S(\hat y) = \{(x,y) \in \R^n: Ax + B\hat y \geq f,~  q(y) > q(\hat y)\}.
\]
We note that any $(x,y) \in S(\hat y)$ cannot be bilevel-feasible, since $\hat y$ is a better follower response for $x$ than $y$ is. 
As for the derivation of DCs one needs sets with bilevel-free interior, the enlarged set  
\[
S^+(\hat y) = \{(x,y) \in \R^n: Ax + B\hat y \geq f - \mathbf 1,~ q(y) \geq q(\hat y)\}
\]
is used. The construction of this set exploits that all inputs and variables involved are integer.

Applying disjunctive arguments to $S^+(\hat y)$, the $m_2+1$ disjuncts
\begin{alignat*}{3}
\bar{ \mathcal{D}}_0(\hat y) : &&\qquad q(y) &\leq q(\hat y)  \\
\bar{ \mathcal{D}}_i(\hat y) : &&\qquad A_i x &\leq f_i - B_i \hat y - 1 \qquad && \forall i = 1, \dots, m_2
\end{alignat*}
are obtained, so one disjunct for the objective comparison and one disjunct for each of the $m_2$ lower-level constraints. We note that in fact,  the latter disjuncts $\bar{\mathcal{D}}_i(\hat y)$ will only appear for linking constraints, and not for the remaining lower-level constraints with $A_i = \mathbf{0}$ (and also not for the bound constraints), as the disjuncts corresponding to the remaining lower-level constraints are just trivially infeasible because $\hat y$ is a feasible follower solution. Since $\hat y$ is fixed, $\bar{\mathcal{D}}_0(\hat y)$ compares $q(y)$ to the constant $q(\hat y)$ and is therefore a quadratic disjunct. This means it is convex only when $R$ is positive semidefinite. Moreover, the separation problem in \citet{Gaar-et-al:2024} thus is an SOCP. 

Note that this construction breaks down once $R$ is not positive semidefinite, as $\bar{\mathcal{D}}_0(\hat y)$ then becomes a non-convex quadratic disjunct that is no longer amenable to an SOCP-based cut generating procedure. Closing precisely this gap is the starting point for the improving-direction bilevel-free sets we introduce next.

\subsection{Improving-direction bilevel-free sets}
\label{sec:bilevelFree}
 In this section, we generalize the characterization of  \emph{improving-direction bilevel-free sets}, which was originally introduced by \cite{watermelon} for ILP-ILP BOPs. We start by repeating their result.
\begin{Proposition}[\cite{watermelon,fischetti2017new}]
  \label{prop:wxBODY_old}
  Consider problem~\eqref{bilevelProblem} with $R=\mathbf{0}$. Then for any $\Delta\yy\in \Z^{n_2}$ such that $g^\top \Delta\yy < 0$
      holds, the
      polyhedron
  \begin{equation*}
     \bar T(\Delta\yy) = \{ (x,y) \in \R^n : Ax + B(y + \Delta\yy) \ge f,~
    \ell \le y + \Delta\yy \le u \}
  \end{equation*}
  does not contain any bilevel-feasible solution.  \end{Proposition}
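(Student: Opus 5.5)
The argument is by contradiction, and it is short: a point of $\bar T(\Delta\yy)$ always has a strictly better follower response, namely $y+\Delta\yy$. Suppose some $(x,y) \in \bar T(\Delta\yy)$ is bilevel-feasible, i.e., $(x,y)\in\mathcal F$. Then $x \in \Z^{n_1}$, $y \in Y(x)$, and $q(y) \le \Phi(x)$, hence $q(y)=\Phi(x)$ because $y$ is itself feasible for the follower. The plan is to exhibit a follower-feasible point with strictly smaller objective, which contradicts the definition of $\Phi(x)$ as the minimum over $Y(x)$.

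The candidate is $\tilde y = y + \Delta\yy$. First check that $\tilde y \in Y(x)$:
\begin{itemize}
\item integrality holds because $y \in \Z^{n_2}$ (bilevel-feasible points are integer by definition of $\mathcal F$) and $\Delta\yy \in \Z^{n_2}$ by assumption;
\item the lower-level constraints $Ax + B\tilde y \ge f$ and the bounds $\ell \le \tilde y \le u$ are exactly the defining inequalities of $\bar T(\Delta\yy)$.
\end{itemize}
Next compare objectives. Since $R=\mathbf 0$, we have $q(\tilde y) = g^\top y + g^\top \Delta\yy < g^\top y = q(y)$, using $g^\top\Delta\yy<0$. Therefore
\[
\Phi(x) \le q(\tilde y) < q(y) \le \Phi(x),
\]
which is a contradiction. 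So $\bar T(\Delta\yy)\cap\mathcal F=\emptyset$.

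No step here is a real obstacle. The only point needing care is integrality of $\tilde y$: $\bar T(\Delta\yy)$ contains non-integer points, but those are not bilevel-feasible by definition, so the argument only needs integer $(x,y)$. Linearity of $q$ makes the objective decrease independent of $y$. This independence is exactly what fails when $R\neq\mathbf 0$, since then $q(y+\Delta\yy)-q(y) = 2y^\top R\Delta\yy + \Delta\yy^\top R\Delta\yy + g^\top\Delta\yy$ depends on $y$. The generalization developed next will presumably add a linear-in-$y$ condition on this difference to the set.
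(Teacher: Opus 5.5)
Your proof is correct and follows essentially the same route as the paper: in the paragraph right after the proposition, and again in the proof of Theorem~\ref{le:wxBODY}, the paper argues that $y+\Delta\yy$ is feasible for the follower at $x$ by the defining inequalities of $\bar T(\Delta\yy)$ and strictly better because $g^\top\Delta\yy<0$, so $y\notin\Omega(x)$. Your explicit check that $y+\Delta\yy$ is integer, via $\mathcal F\subseteq\Z^n$ and $\Delta\yy\in\Z^{n_2}$, is a detail the paper leaves implicit.
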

Any vector $\Delta\hat{y} \in \mathbb{Z}^{n_2}$ with $g^\top \Delta\yy < 0$ (as it occurs in Proposition~\ref{prop:wxBODY_old}) is called an \emph{improving direction}: for any feasible follower solution $y \in Y(x)$, moving into direction $\Delta\hat{y}$ strictly improves the follower objective function, since $g^\top(y + \Delta\hat{y}) < g^\top y$. Moreover, due to the definition of $\bar T(\Delta\yy)$ we have that for any $(x,y) \in \bar T(\Delta\yy)$ it holds that $y + \Delta\hat{y} \in Y(x)$. This certifies that $y + \Delta\hat{y}$ is a better feasible follower solution than $y$, thus $y$ is not in $\Omega(x)$, and hence $(x, y)$ is not bilevel-feasible.

In the following, we expand  Proposition~\ref{prop:wxBODY_old} to the case when $R \neq \mathbf{0}$. Note that compared to Proposition~\ref{prop:wxBODY_old}, the condition on improving the objective function value actually becomes part of the definition of the considered set. We discuss this further after our theorem. 

\begin{Theorem}\label{le:wxBODY}
 Consider problem~\eqref{bilevelProblem}. Then for any $\Delta \hat y \in \Z^{n_2}$, the set
\[
T(\Delta \hat y) = \Defset{ (x,y) \in \R^n }{ Ax  + B(y  + \Delta \hat y) \ge f,~   \ell \le  y  +   \Delta \hat y  \le u,~ q(y+\Delta \hat{y})< q(y)
}
\]
 does not contain any bilevel-feasible solution.
\end{Theorem}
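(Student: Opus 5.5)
The plan is a direct proof by contradiction, mirroring the argument given after Proposition~\ref{prop:wxBODY_old}. The only change is that the strict improvement of the follower objective is now part of the definition of $T(\Delta\yy)$. It no longer follows from a sign condition on $g^\top\Delta\yy$, which is needed because $q(y+\Delta\yy)-q(y)=2y^\top R\Delta\yy+\Delta\yy^\top R\Delta\yy+g^\top\Delta\yy$ depends on $y$ when $R\neq\mathbf 0$.

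Fix $\Delta\yy\in\Z^{n_2}$ and suppose, for contradiction, that some $(x,y)\in T(\Delta\yy)$ is bilevel-feasible, i.e., $(x,y)\in\mathcal F$. The argument then runs in three steps.

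\begin{enumerate}
\item Unpack $\mathcal F$. By definition, $(x,y)\in\Z^n$, $y\in Y(x)$ and $q(y)\le\Phi(x)$. Since $y$ itself is feasible for the follower problem, $\Phi(x)\le q(y)$, so $q(y)=\Phi(x)$.
\item Build a follower candidate. Set $y':=y+\Delta\yy$. Since $y\in\Z^{n_2}$ and $\Delta\yy\in\Z^{n_2}$, we get $y'\in\Z^{n_2}$. The first two conditions defining $T(\Delta\yy)$ give $By'\ge f-Ax$ and $\ell\le y'\le u$. Together these show $y'\in Y(x)$.
\item Conclude. The third condition defining $T(\Delta\yy)$ gives $q(y')<q(y)=\Phi(x)$. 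This contradicts the definition of $\Phi(x)$ as the minimum of $q$ over $Y(x)$, since that minimum must satisfy $\Phi(x)\le q(y')$. Hence no bilevel-feasible point lies in $T(\Delta\yy)$.
\end{enumerate}

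There is no substantial obstacle. The one point needing care is the integrality bookkeeping in step 2: $T(\Delta\yy)$ is a subset of $\R^n$, so integrality of $y'$ must come from $(x,y)\in\mathcal F\subseteq\Z^n$ together with $\Delta\yy\in\Z^{n_2}$. It is also worth checking that $\Phi(x)$ is well defined, i.e., that the minimum is attained. This holds because $Y(x)$ is nonempty (it contains $y$) and finite (it consists of integer points within the bounds $\ell\le y\le u$).
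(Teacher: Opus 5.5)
Your proof is correct and follows the same argument as the paper: the first two defining conditions of $T(\Delta\hat y)$ make $y+\Delta\hat y$ a feasible follower response at $x$, and the third makes it strictly better than $y$, which contradicts $y\in\Omega(x)$. Your explicit remark that $y+\Delta\hat y$ is integral because $(x,y)\in\mathcal F\subseteq\Z^n$ and $\Delta\hat y\in\Z^{n_2}$ fills in a detail the paper's proof leaves implicit.
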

\begin{proof}
Due to $q(y+\Delta \hat{y})< q(y)$ we have that by moving from $y$ to $y + \Delta\hat{y}$ the follower objective function value is improved. Moreover, due to $Ax  + B(y  + \Delta \hat y) \ge f$ and $\ell \le  y  +   \Delta \hat y  \le u$ we have that the point $(x,y+\Delta \yy)$ is feasible for the follower problem. Hence, any $(x,y)$ in $T(\Delta \hat y)$ cannot be bilevel-feasible, as $(x,y+\Delta \yy)$ is a better solution for the follower problem compared to $(x,y)$.
\end{proof}

For $R = \mathbf{0}$, the requirement $q(y + \Delta \yy) < q(y)$ within the definition of $T(\Delta \hat y)$ in Theorem~\ref{le:wxBODY} reduces to $g^\top \Delta\hat{y} < 0$, which is exactly the condition on $\Delta\hat{y}$ in Proposition~\ref{prop:wxBODY_old}. Thus, for any $\Delta \yy$ not fulfilling this condition, the set $T(\Delta \yy)$ is empty.
Moreover, note that the set $T(\Delta \yy)$ (similar to  $\bar T(\Delta\yy)$) can contain points $(x,y)$ which are not feasible for \ccrSLR (and therefore not feasible for \SLR).

In order to be able to use DCs, we need a polyhedron whose interior contains no bilevel-feasible solution. Towards this end, we introduce an extension of $T(\Delta\hat{y})$ that yields a polyhedron that satisfies this required property.

\begin{Corollary} \label{cor:Xdelta}
 Consider problem~\eqref{bilevelProblem}. Then for any $\Delta \hat y \in \Z^{n_2}$ with $R\Delta   \hat y \neq \mathbf{0}$ or $g^\top \Delta\yy < 0$, the extended set $T^+(\Delta \hat y)$ defined as 
\begin{align*}
T^+(\Delta \hat y) = \Defset{ (x,y) \in \R^n}{ Ax  + B (y  + \Delta \hat y) \ge f-\mathbf{1},~   \ell-\mathbf{1} \le  y  +   \Delta \hat y  \le u+ \mathbf{1},~ q(y+\Delta \hat{y})\leq q(y) }
\end{align*}
is a polyhedron, even for a non-convex quadratic function $q$, and does not contain any bilevel-feasible solution in its interior $\intset(T^+(\Delta \hat y))$, for which $\intset(T^+(\Delta \hat y))\cap \Z^{n}=T(\Delta \hat y)\cap \Z^{n}$ holds. 
\end{Corollary}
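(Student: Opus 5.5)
Expanding the quadratic is the key step. Since $q(y) = y^\top R y + g^\top y$ and $R$ is symmetric, we have
\[
q(y+\Delta\yy) - q(y) = 2\,\Delta\yy^\top R\, y + \Delta\yy^\top R\,\Delta\yy + g^\top \Delta\yy .
\]
The quadratic terms in $y$ cancel, so for fixed $\Delta\yy$ this difference is affine in $y$. Hence the condition $q(y+\Delta\yy)\le q(y)$ is the single linear inequality
\[
2\,\Delta\yy^\top R\, y \le -\Delta\yy^\top R\,\Delta\yy - g^\top\Delta\yy .
\]
The other defining constraints of $T^+(\Delta\yy)$ are already linear in $(x,y)$, so $T^+(\Delta\yy)$ is a finite intersection of closed halfspaces, i.e., a polyhedron. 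This holds regardless of the definiteness of $R$.

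To describe the interior, I use the hypothesis $R\Delta\yy\neq\mathbf 0$ or $g^\top\Delta\yy<0$, which excludes the degenerate case. If $R\Delta\yy \neq \mathbf 0$, the objective inequality has a nonzero normal vector. If $R\Delta\yy=\mathbf 0$ and $g^\top\Delta\yy<0$, the inequality reads $0\le -g^\top\Delta\yy$ and holds everywhere, so it can be dropped. The other constraints also have nonzero normals or are vacuous: rows of $A,B$ that are entirely zero give constant inequalities, and bound constraints have unit normals. For a polyhedron $\{z : a_j^\top z \le b_j\}$ with all $a_j\neq 0$, every point satisfying all inequalities strictly is an interior point. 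Conversely, an interior point cannot lie on any hyperplane $a_j^\top z = b_j$, because moving slightly along $a_j$ would leave the set. So the interior is exactly the set of points satisfying all nondegenerate inequalities strictly. Vacuous constraints need a brief side remark: a constant inequality either holds everywhere and is dropped, or fails and makes both sets empty.

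Next I prove $\intset(T^+(\Delta\yy))\cap\Z^n = T(\Delta\yy)\cap\Z^n$ using integrality of the data. For integer $(x,y)$, the vectors $Ax+B(y+\Delta\yy)-f$ and $y+\Delta\yy-\ell$ are integer. So the strict inequality $Ax+B(y+\Delta\yy) > f-\mathbf 1$ is equivalent to $Ax+B(y+\Delta\yy)\ge f$, and likewise for both bounds. Since $R$, $g$ and $\Delta\yy$ are integral, $q(y+\Delta\yy)-q(y)$ is an integer, and the strict inequality $q(y+\Delta\yy)<q(y)$ is exactly the defining condition of $T(\Delta\yy)$. In the case $R\Delta\yy=\mathbf 0$, the inequality holds strictly automatically because $g^\top\Delta\yy<0$, so it matches the strict version in $T$ as well. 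Combining this with the interior description gives the equality.

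Finally, bilevel-feasible solutions are integer, so any bilevel-feasible point in $\intset(T^+(\Delta\yy))$ would lie in $T(\Delta\yy)$, which contradicts Theorem~\ref{le:wxBODY}. The main obstacle is the interior characterization: one must justify that the interior equals the strict-inequality set. This needs every relevant normal vector to be nonzero, and that is exactly where the hypothesis on $\Delta\yy$ enters.
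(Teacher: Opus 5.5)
Your proposal is correct and follows the paper's proof essentially step by step. You expand $q(y+\Delta\hat y)-q(y)$ into the affine inequality $2\Delta\hat y^\top R y + \Delta\hat y^\top R\Delta\hat y + g^\top\Delta\hat y \le 0$, and use the hypothesis on $\Delta\hat y$ to make that constraint either nondegenerate or trivially strict. You then identify the interior with the strict-inequality set, turn the strict inequalities into $Ax+B(y+\Delta\hat y)\ge f$ and $\ell\le y+\Delta\hat y\le u$ by integrality, and conclude with Theorem~\ref{le:wxBODY}. Your explicit nonzero-normal argument for the interior characterization is a detail the paper only asserts.

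The one slip is your side remark on constant rows. A row with $A_i=\mathbf 0$, $B_i=\mathbf 0$, $f_i=1$ makes $0\ge f_i-1$ vacuous but $0\ge f_i$ infeasible. In that degenerate, follower-infeasible case the stated equality genuinely fails. The paper's proof also ignores this case, and it is harmless once trivial all-zero rows are discarded.
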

\begin{proof}
 We first observe that the constraint $q(y+\Delta \hat{y}) \leq q(y)$ can be reformulated as
 \begin{align}\label{eq:reformulation}
 q(y+\Delta \hat{y}) - q(y) = 
 & (y +\Delta \hat y)^\top R(y +\Delta \hat y) + g^\top (y +\Delta \hat y) - y^\top Ry - g^\top y = \\
 & \quad 2{\Delta   \hat y}^\top R y + \Delta   \hat y^\top R\Delta   \hat y + g^\top \Delta \hat    y \le 0, \nonumber
 \end{align}  
 which is linear in $y$, thus, the set $T^+(\Delta \hat y)$ is indeed a polyhedron. Furthermore, if $ R\Delta  \hat y \neq \mathbf{0}$ and thus also ${\Delta   \hat y}^\top R \neq 0$ holds, it follows that this constraint does not vanish in the definition of $T^+(\Delta\yy)$.
 On the other hand, if $R\Delta   \hat y = \mathbf{0}$ and $g^\top \Delta\yy < 0$ holds, then this constraint is trivially strictly satisfied.

 Thus, to be in the interior of $T^+(\Delta\yy)$, for any integer solution $(x,y)$  is equivalent to satisfying
  \begin{equation*}
    Ax + B (y + \Delta\yy) > f - \mathbf{1}, 
    \qquad 
    \ell - \mathbf{1} < y+\Delta\yy < u + \mathbf{1}, 
    ~\text{and}\qquad 
    q(y+\Delta \hat{y})< q(y).
  \end{equation*}
 Due to $A \in \Z^{m_2 \times n_1}$, $B \in \Z^{m_2 \times n_2}$,  $f \in \Z^{m_2 }$, $\ell,u\in\Z^{n_2}$ and $x\in \mathbb Z^{n_1}$,$y\in \mathbb Z^{n_2}$ the
  first two conditions are equivalent to $Ax +B (y + \Delta\yy) \ge f$
  and $\ell \le y+\Delta\yy \le u$.
  As a consequence, indeed $\intset(T^+(\Delta \hat y))\cap \Z^{n} = T(\Delta \hat y)\cap \Z^{n}$ holds.
  Thus, the claim follows from Theorem~\ref{le:wxBODY}, as any bilevel-feasible solution must be integer. 
\end{proof}

The next result shows that the family of bilevel-free polyhedra $T^+(\Delta \hat y)$ is rich enough to cover every bilevel-infeasible solution in one of its interiors, which is what makes it possible to use them to design a \BC algorithm, as we do later in this section.

\begin{Theorem}\label{thm:existence}
 Consider problem~\eqref{bilevelProblem} and a bilevel-infeasible solution $(x^*,y^*)$ of it. Then, there exists a $\Delta\yy  \in \Z^{n_2}$ with $ R\Delta   \hat y \neq \mathbf{0}$ or $g^\top \Delta\yy < 0$  such that $(x^*,y^*) \in \intset(T^+(\Delta \hat y))$.
\end{Theorem}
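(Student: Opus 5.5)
The plan is to construct $\Delta\yy$ explicitly from an improving follower response. Since $(x^*,y^*)$ is bilevel-infeasible, it is feasible for the \SLR. In particular it is integer, satisfies $y^* \in Y(x^*)$, and violates \eqref{eq:inequValueFunction}, so $q(y^*) > \Phi(x^*)$. The set $Y(x^*)$ is finite, since it consists of integer points with $\ell \le y \le u$, and it is nonempty because $y^*\in Y(x^*)$. Hence $\Phi(x^*)$ is attained by some $\tilde y \in Y(x^*)$ with $q(\tilde y) < q(y^*)$. Any feasible $\tilde y$ with $q(\tilde y)<q(y^*)$ would also serve. I will set $\Delta\yy = \tilde y - y^* \in \Z^{n_2}$.

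Next I check the side condition $R\Delta\yy \neq \mathbf{0}$ or $g^\top\Delta\yy<0$. By the identity \eqref{eq:reformulation} in the proof of Corollary~\ref{cor:Xdelta},
\[
q(y^*+\Delta\yy)-q(y^*) = 2\Delta\yy^\top R y^* + \Delta\yy^\top R\Delta\yy + g^\top\Delta\yy < 0 .
\]
Suppose $R\Delta\yy = \mathbf{0}$. Then both quadratic terms vanish, and we get $g^\top\Delta\yy<0$. So at least one of the two conditions always holds.

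Finally I show $(x^*,y^*)\in \intset(T^+(\Delta\yy))$. All three groups of constraints defining $T^+(\Delta\yy)$ are affine in $(x,y)$, the last one via \eqref{eq:reformulation}. It therefore suffices to show that $(x^*,y^*)$ satisfies each constraint strictly, because then a small neighborhood stays inside the polyhedron. Since $y^*+\Delta\yy=\tilde y\in Y(x^*)$, we have $Ax^*+B(y^*+\Delta\yy) \ge f > f-\mathbf 1$ and $\ell-\mathbf 1 < \ell \le y^*+\Delta\yy \le u < u+\mathbf 1$. The objective constraint holds strictly because $q(\tilde y)<q(y^*)$.

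One degenerate case needs care. If $R\Delta\yy=\mathbf 0$, the linear constraint has zero coefficients in $y$, but it reads as the constant inequality $g^\top\Delta\yy\le 0$, which holds strictly and so imposes no restriction. I expect this step to be the only subtle point: one must make sure that "interior" is full-dimensional here, meaning no constraint is an implicit equality. Strict satisfaction of every finite affine inequality guarantees this.
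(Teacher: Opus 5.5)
Your proof is correct and follows the paper's argument: you set $\Delta\hat y = \hat y - y^*$ for an optimal (or any strictly improving) follower response at $x^*$, and you use identity~\eqref{eq:reformulation} to get $R\Delta\hat y \neq \mathbf{0}$ or $g^\top\Delta\hat y<0$. The only difference is the last step, and both versions are valid: you check membership in $\intset(T^+(\Delta\hat y))$ directly by showing every affine defining inequality holds strictly, while the paper shows $(x^*,y^*)\in T(\Delta\hat y)\cap\Z^n$ and then applies Corollary~\ref{cor:Xdelta}.
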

\begin{proof}
Let $\hat y \in \arg \min \left\{
q(y) :    A x^*+ B y \geq f,~ \ell \le y \le u,~ y \in \mathbb{Z}^{n_2} 
%Ax+By \geq f,~  y \in \mathcal{Y}, 
%~ y \in \mathbb Z^{n_2}
%~y_j \in \mathbb Z \quad \forall j \in J
\right\}$ be an optimal solution to the follower problem for $x^*$.
Define $\Delta \yy=\hat y - y^*$.
Since $(x^*,y^*)$ is bilevel-infeasible, we have that $q(y^*+\Delta \hat{y})< q(y^*)$, which implies that $R\Delta   \hat y \neq \mathbf{0}$ or $g^\top \Delta\yy < 0$ with the computations done in~\eqref{eq:reformulation}. Moreover, we have that $Ax^*  + B(y^*  + \Delta \hat y) \ge f$ and $\ell \le  y^*  +   \Delta \hat y  \le u$ by definition of $\hat y$ and $\Delta \hat y$. Thus, $(x^*,y^*) \in T(\Delta \hat y)$ and consequently $(x^*,y^*) \in \intset (T^+(\Delta \hat y))$ by  Corollary~\ref{cor:Xdelta}, as any bilevel-infeasible solution is integer.
\end{proof}

Theorem~\ref{thm:existence} guarantees that a suitable improving direction $\Delta \hat y$ always exists for a given bilevel-infeasible solution. Two things remain to be established before this can be turned into an exact algorithm: that such a direction can actually be turned into a violated cut by a cut generating LP, and that by using this fact, we can obtain a B\&C algorithm. We address these points in Sections~\ref{sec:CGLP} and~\ref{sec:BC}.

\subsection{Improving-direction disjunctive cuts}
\label{sec:CGLP}

Using disjunctive arguments, Corollary~\ref{cor:Xdelta} states that for a given $\Delta \yy \in \Z^{n_2}$ with with $R\Delta   \hat y \neq \mathbf{0}$ or $g^\top \Delta\yy < 0$, there are no bilevel-feasible solutions in $\intset(T^+(\Delta \hat y))$, and thus implies that all bilevel-feasible solutions of problem~\eqref{bilevelProblem} must be in the half-spaces determined by the (linear) disjuncts 
\begin{alignat*}{3}
    \mathcal{D}_0(\Delta \hat y) : &&2{\Delta \hat y}^\top R y &\geq - \Delta \hat y^\top R\Delta \hat y - g^\top \Delta  \hat y \qquad \\
\mathcal{D}_i(\Delta \hat y) : && \qquad -A_i x - B_i y  &\geq -f_i +  B_i \Delta \hat y + 1  \quad &&\forall i=1, \dots, m_2 \\%{\color{red}\sigma \leq 0},
\mathcal{D}^\ell_j(\Delta \hat y) :  &&-y_j &\geq - \ell_j + \Delta \hat y_j + 1 &&\forall j = 1, \dots, n_2 \\
\mathcal{D}^u_j(\Delta \hat y) :  &&y_j &\geq u_j - \Delta \hat y_j +1 &&\forall j = 1, \dots, n_2, 
\end{alignat*}
which are obtained by reversing the linear inequalities that define $T^+(\Delta \hat y)$. Specifically, this means that
\begin{align} \label{eq:fsubsetD}
\mathcal F \subseteq \mathcal{D}_0(\Delta \hat{y}) \cup \mathcal{D} (\Delta \hat{y}) \cup \mathcal{D}^\ell (\Delta \hat{y}) \cup \mathcal{D}^u(\Delta \hat{y}),
\end{align}
where 
\[ \mathcal{D} (\Delta \hat{y})= \bigcup_{i=1}^{m_2}  \mathcal{D}_i(\Delta \hat{y}), \qquad \mathcal{D}^\ell (\Delta \hat{y})= \bigcup_{j=1}^{n_2}  \mathcal{D}^\ell_j(\Delta \hat{y}),~\text{and} \qquad 
\mathcal{D}^u(\Delta \hat{y}) = \bigcup_{j=1}^{n_2}  \mathcal{D}^u_j(\Delta \hat{y}). \]
The set $\mathcal{D} (\Delta \hat{y})$ contains all disjuncts derived from the lower-level constraints~\eqref{general}, whereas the sets $\mathcal{D}^\ell (\Delta \hat{y})$ and $\mathcal{D}^u (\Delta \hat{y})$ consist of all disjuncts derived from the lower-bound and upper-bound constraints~\eqref{ybounds}, respectively.

Since the disjuncts are all linear, a DC can be generated by solving an LP, as we will describe in more detail soon. 
However, this comes at the cost of involving all follower constraints on $y$ including the bound constraints in the cut generation process, reflected in the disjuncts $\mathcal{D}^\ell (\Delta\hat{y})$ and $\mathcal{D}^u (\Delta\hat{y})$. The reason is that our bilevel-free set $T^+(\Delta\hat{y})$ excludes bilevel-feasible solutions via a \emph{relative} criterion, i.e., via checking whether $y + \Delta\hat{y}$ improves upon~$y$, whereas the bilevel-free set $S(\hat{y})$ of~\cite{Gaar-et-al:2024} relies on an \emph{absolute} criterion, checking whether~$\hat{y}$ improves upon~$y$. As a consequence, our approach gives rise to $m_2 + 2n_2 + 1$ disjuncts, compared to only $m_2 + 1$ for the cut generation of~\cite{Gaar-et-al:2024}. However, their cut generation is SOCP-based, while we are only required to solve an LP to obtain a DC, which could result in a substantial computational advantage.

We now want to turn our attention to obtaining DCs based on our disjuncts 
$\mathcal{D}_0(\Delta \hat y)$, $\mathcal{D}_i(\Delta \hat y)$, $\mathcal{D}_j^\ell(\Delta \hat y)$ and $\mathcal{D}_j^u(\Delta \hat y)$.
To derive a cut generating LP (CGLP) for a given $\Delta \hat y$, we use the disjunctive programming framework of Balas (see, e.g, \citep{balas_book}). For the ease of presentation, we first aggregate all indices of different disjuncts into a single unified index set $\mathcal{K} = \{0\} \cup \{1, \dots, m_2\} \cup \{1, \dots, n_2\}^\ell \cup \{1, \dots, n_2\}^u$. Furthermore, we rewrite the linear conditions on the disjuncts as
\begin{alignat*}{7}
  \mathcal{D}_0(\Delta \hat y) : & && \phantom{- } 2\Delta\hat{y}^\top R y &&- (-\Delta\hat{y}^\top R \Delta\hat{y} - g^\top \Delta\hat{y})~&&\geq 0 &\qquad& \\
  \mathcal{D}_i(\Delta \hat y) :    &\qquad -A_i x &&- B_i y &&- (-f_i + B_i \Delta \hat{y} + 1)  &&\geq 0  && \forall i=1, \dots, m_2  \\
  \mathcal{D}_j^\ell(\Delta \hat y) :    & && -y_j &&- (-\ell_j + \Delta \hat{y}_j + 1) &&\geq 0  && \forall j=1, \dots, n_2  \\
  \mathcal{D}_j^u(\Delta \hat y) :   & && \phantom{- }y_j &&- (u_j - \Delta \hat{y}_j + 1) &&\geq 0 && \forall j=1, \dots, n_2.
\end{alignat*}
In this form it is easy to see that they can be
compactly expressed via the unified system 
\begin{equation*}
G x + H y - s \geq 0,
\end{equation*}
where the entries of the matrices $G$ and $H$, and the vector right-hand side~$s$ are defined appropriately for each type of disjunct as detailed in Table~\ref{tab:disjunction_mapping} and the rows of $G$, $H$ and the elements of $s$ are indexed by $k \in \mathcal{K}$.

\begin{table}[hbt]
\centering
\caption{Specific definition of the row-vectors $G_k$, $H_k$ and the scalar $s_k$ for each disjunct $k \in \mathcal{K}$}
\label{tab:disjunction_mapping}
\begin{tabular}{ll|ccc}
\hline
Disjunct & Disjunct type & $G_k$ & $H_k$ & $s_k$ \\ \hline
$\mathcal{D}_0(\Delta \hat y)$ & Objective  & $\mathbf{0}$ & $2\Delta\hat{y}^\top R$ & $-\Delta\hat{y}^\top R \Delta\hat{y} - g^\top \Delta\hat{y} $ \\ %\hline
$\mathcal{D}_i(\Delta \hat y)$ & Lower-level & $-A_i$ & $-B_i$ & $-f_i + B_i \Delta \hat{y} + 1$ \\ %\hline
$\mathcal{D}_j^\ell(\Delta \hat y)$ & Lower-bound  & $\mathbf{0}$ & $-e_j^\top$ & $-\ell_j + \Delta \hat{y}_j + 1$ \\ %\hline
$\mathcal{D}_j^u(\Delta \hat y)$ & Upper-bound  & $\mathbf{0}$ & $e_j^\top$ & $u_j - \Delta \hat{y}_j + 1$ \\ \hline
\end{tabular}
\end{table}

To verify whether a solution \((x^*, y^*)\) belongs to the convex hull of the disjuncts for a given $\Delta \hat y$, one may solve an LP, which involves the standard convex hull representation of a union of polyhedra. In particular, each disjunct $k \in \mathcal{K}$ is modeled using disjunct-specific auxiliary copy  variables $x^k$ and $y^k$ and a continuous weighting variable $\lambda_k \geq 0$ within the LP
%{\allowdisplaybreaks
\begin{subequations}
\label{primalcglp}
\begin{alignat}{7}
\min  ~ 0 \qquad  \\
\st  \sum_{k \in \mathcal{K}} x^k &= x^* && && (\alpha) \label{cglp:alpha}\\
 \sum_{k \in \mathcal{K}} y^k &= y^* && && (\beta) \label{cglp:beta}\\
 \sum_{k \in \mathcal{K}} \lambda_k &= 1 && && (\gamma) \label{cglp:gamma}\\
M x^k + N y^k - h \lambda_k &\geq 0 &\qquad &\forall k \in \mathcal{K} &\qquad \qquad &(\rho^k) \label{cglp:rho}\\
A x^k + B y^k - f \lambda_k &\geq 0 &&\forall k \in \mathcal{K} && (\mu^k) \label{cglp:mu}\\
y^k - \ell \lambda_k &\geq 0 &&\forall k \in \mathcal{K} && (\eta^k) \label{cglp:eta}\\
-y^k + u \lambda_k &\geq 0 &&\forall k \in \mathcal{K} && (\zeta^k) \label{cglp:zeta}\\
 G_k x^k + H_k y^k - s_k \lambda_k &\geq 0  &&\forall k \in \mathcal{K} && (\theta_k) \label{cglp:theta} \\
 \lambda_k &\geq 0 &&\forall k \in \mathcal{K}, \label{cglp:nonneg}
\end{alignat}
\end{subequations}
%}
where~\eqref{cglp:alpha}-\eqref{cglp:gamma} and~\eqref{cglp:nonneg} represent the fact that $(x^*,y^*)$ should be represented as  convex combinations of the disjunct-specific auxiliary copy variables, \eqref{cglp:rho}-\eqref{cglp:zeta} make sure that the copy variables are feasible solutions of the \ccrSLR, and finally~\eqref{cglp:theta} ensures that the copy variables are in the right disjunct.

By taking the dual of the LP~\eqref{primalcglp}, we obtain the CGLP
\begin{equation*}
\begin{aligned}
\max ~\alpha^\top x^* + \beta^\top y^* + \gamma \qquad \qquad \\
\st \alpha + M^\top \rho^k + A^\top \mu^k + G_k ^\top \theta_k &= 0 &\qquad& \forall k \in \mathcal{K} \\
 \beta + N^\top \rho^k + B^\top \mu^k + \eta^k - \zeta^k + H_k ^\top \theta_k &= 0 && \forall k \in \mathcal{K} \\
 \gamma - h^\top \rho^k - f^\top \mu^k - \ell^\top \eta^k + u^\top \zeta^k - s_k \theta_k &\leq 0 && \forall k \in \mathcal{K} \\
 \rho^k, \mu^k, \eta^k, \zeta^k, \theta_k &\geq 0 && \forall k \in \mathcal{K},
\end{aligned}
\tag{CGLP}
\label{eq:CGLP}
\end{equation*}
which directly generates valid  cut coefficients $\alpha, \beta$ and $\gamma$.

If the LP~\eqref{primalcglp} is feasible, then $(x^*,y^*)$ belongs to the convex hull of the union of the disjuncts $\mathcal{D}_k(\Delta \hat{y})$ for $k \in \mathcal{K}$, each intersected with the feasible region of \ccrSLR, and hence no inequality that is valid for this convex hull cuts off $(x^*,y^*)$. In contrast, if~\eqref{primalcglp} is infeasible, then the constraints of~\eqref{eq:CGLP} define a homogeneous cone, which makes the objective function unbounded unless a normalization condition is introduced. To ensure a bounded, non-trivial optimal solution, we append the standard normalization condition

\begin{equation*}
    \sum_{k \in \mathcal{K}} \left( \mathbf{1}^\top \rho^k + \mathbf{1}^\top \mu^k + \mathbf{1}^\top \eta^k + \mathbf{1}^\top \zeta^k + \theta_k \right) \leq 1
\end{equation*}
to~\eqref{eq:CGLP} which acts as an $L_1$-norm constraint on the non-negative dual multipliers across all disjuncts (see, e.g., \citet{Gaar-et-al:2024}).
Then, by solving~\eqref{eq:CGLP} we obtain the cut coefficients $\alpha, \beta$ and $\gamma$ such that $\alpha^\top x + \beta^\top y + \gamma \leq 0$  is a DC that tries to separate $(x^*,y^*)$ from the convex hull of $\mathcal{D}_0(\Delta \hat{y}) \cup \mathcal{D} (\Delta \hat{y}) \cup \mathcal{D}^\ell (\Delta \hat{y}) \cup \mathcal{D}^u(\Delta \hat{y})$.

%%%%%%%%%%%%%

\subsection{A branch-and-cut algorithm}
\label{sec:BC}

We now turn our attention to describing how to use~\eqref{eq:CGLP} within a \BC algorithm. To do so, we still need to establish that in any \BC node, for an improving direction $\Delta \hat y$ (obtained for example like it is done in the proof of Theorem~\ref{thm:existence}), \eqref{eq:CGLP} is actually guaranteed to return a DC violated by the bilevel-infeasible solution $(x^*,y^*)$ under consideration.
In order to achieve that, let
\begin{align}\label{previousCuts}
    P x + Q y \ge r
\end{align} 
for appropriately set $P$, $Q$ and $r$, 
contain any \DCs derived at any ancestor \BC node and the local variable bounds imposed by previous branching decisions at the current node of the \BC.
Then let
\begin{equation*}
\mathcal P = \Defset{(x,y) \in \R^n}{\eqref{ineq:linear},\eqref{ineq:linking},\eqref{ineq:ybounds}, \eqref{previousCuts}},
\end{equation*}
so
$\mathcal P \subseteq \R^n$ denotes the current node relaxation of the \SLR of~\eqref{bilevelProblem} 
within our B\&C algorithm for solving~\eqref{bilevelProblem}. 
Note that by construction, $\mathcal P$ is a subset of \ccrSLR.

Theorem~\ref{thm:existence} guarantees that, for a bilevel-infeasible solution $(x^*,y^*)$, some $\Delta \hat y$ exists such that $(x^*,y^*)$ lies in the interior of $T^+(\Delta \hat y)$.
Furthermore, by Corollary~\ref{cor:Xdelta} the interior of $T^+(\Delta \hat y)$ does not contain any bilevel-feasible solution, while we constructed $\mathcal D_0(\Delta \hat y) \cup \mathcal D(\Delta \hat y) \cup \mathcal D^\ell(\Delta \hat y) \cup \mathcal D^u(\Delta \hat y)$ in such a way that all bilevel-feasible solutions are contained.
These, however, are statements about the sets themselves and do not yet guarantee that~\eqref{eq:CGLP} (that tries to separate $(x^*,y^*)$ from the convex hull of the disjunction) can actually produce a cut violated by $(x^*,y^*)$. The following theorem closes this gap, provided $(x^*,y^*)$ is an extreme point of $\mathcal P$. This is a mild requirement for usage within a \BC, as it is automatically satisfied whenever $(x^*,y^*)$ is returned as the optimal solution of a node relaxation solved with a simplex-type LP solver.

\begin{Theorem}\label{thm:separation}
Let $(x^*,y^*)$ be a bilevel-infeasible solution that is an extreme point of $\mathcal P$. Let $\Delta \hat y \in \Z^{n_2}$ with $R\Delta   \hat y \neq \mathbf{0}$ or $g^\top \Delta\yy < 0$ be such that $(x^*,y^*) \in \intset (T^+(\Delta \hat y))$. Then, there is a valid inequality for $\mathrm{conv}\big(\bigcup_{k \in \mathcal K} (\mathcal P \cap \mathcal D_k(\Delta \hat y))\big)$ that is violated by $(x^*,y^*)$, and it can be obtained by solving \eqref{eq:CGLP}.
\end{Theorem}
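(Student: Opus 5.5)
Our plan is to show that $(x^*,y^*)\notin \mathrm{conv}\big(\bigcup_{k\in\mathcal K}(\mathcal P\cap\mathcal D_k(\Delta\yy))\big)$. Once this is established, separation is standard. The set $\mathcal P$ is a bounded polyhedron by Assumption~\ref{as:bounded}, since it is contained in the feasible region of \ccrSLR. Hence each $\mathcal P\cap\mathcal D_k(\Delta\yy)$ is a polytope, possibly empty. Their union has a closed convex hull, namely a polytope, and the extended formulation \eqref{primalcglp} describes it exactly. We use \eqref{primalcglp} with the rows~\eqref{previousCuts} of $\mathcal P$ added to each copy. Bounded copies guarantee there are no recession-direction issues with $\lambda_k=0$: here $\lambda_k=0$ forces $(x^k,y^k)=0$. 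If $(x^*,y^*)$ lies outside this hull, then \eqref{primalcglp} is infeasible. By Farkas' lemma, or LP duality, the dual \eqref{eq:CGLP} then has a ray with positive objective $\alpha^\top x^*+\beta^\top y^*+\gamma>0$. After normalization this yields a finite optimum with positive value. The dual constraints certify validity: for each $k$, the inequality $\alpha^\top x+\beta^\top y+\gamma\le 0$ is implied on $\mathcal P\cap\mathcal D_k(\Delta\yy)$ as a nonnegative combination of its defining rows. So we obtain a valid cut violated by $(x^*,y^*)$.

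The core step is showing that $(x^*,y^*)$ is not in the hull. Suppose it were. Then $(x^*,y^*)=\sum_k\lambda_k z^k$ with $z^k\in\mathcal P\cap\mathcal D_k(\Delta\yy)$, $\lambda\ge0$ and $\sum\lambda_k=1$. Every $z^k$ with $\lambda_k>0$ lies in $\mathcal P$. Since $(x^*,y^*)$ is an extreme point of $\mathcal P$, a convex combination of points of $\mathcal P$ with positive weights can equal it only if all these points coincide with it. Therefore $(x^*,y^*)=z^k\in\mathcal D_k(\Delta\yy)$ for some $k$.

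It remains to rule this out. Each $\mathcal D_k(\Delta\yy)$ is the closed halfspace obtained by reversing one defining inequality of $T^+(\Delta\yy)$, i.e., the complement of the corresponding open halfspace. We have $(x^*,y^*)\in\intset(T^+(\Delta\yy))$. Any point of the interior of an intersection of finitely many halfspaces satisfies each non-vanishing defining inequality strictly. Indeed, if it satisfied one with equality, a small step along the normal direction would leave $T^+(\Delta\yy)$. Hence $(x^*,y^*)$ violates every reversed inequality, so it lies in no $\mathcal D_k(\Delta\yy)$, which is a contradiction.

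One subtlety concerns the objective disjunct when $R\Delta\yy=\mathbf 0$. In that case $\mathcal D_0(\Delta\yy)$ reads $0\ge -g^\top\Delta\yy$, which is empty because $g^\top\Delta\yy<0$, so it causes no trouble. When $R\Delta\yy\ne\mathbf 0$ the inequality is genuine and the interior argument applies; the hypothesis of Corollary~\ref{cor:Xdelta} is used exactly here.

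We expect the main obstacle to be the first paragraph's technical care rather than the extreme-point argument. Specifically, we must make sure that \eqref{primalcglp} exactly represents the convex hull over $\mathcal P$, and not just over \ccrSLR, which requires including \eqref{previousCuts}. We must also check that empty disjuncts are handled correctly by $\lambda_k=0$, using boundedness. Finally, we must confirm that the normalized \eqref{eq:CGLP} attains a strictly positive optimum; this follows since scaling any positive-objective ray to satisfy the $L_1$ normalization keeps the objective positive.
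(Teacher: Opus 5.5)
Your proposal is correct and follows essentially the same route as the paper: an interior point of $T^+(\Delta\hat y)$ lies in no disjunct, so writing $(x^*,y^*)$ as a convex combination of points of $\bigcup_k(\mathcal P\cap\mathcal D_k(\Delta\hat y))$ would contradict its extremality in $\mathcal P$, and LP duality with the normalization then yields the violated cut. Beyond the paper's argument, you also include the rows \eqref{previousCuts} in \eqref{primalcglp} so that it describes the hull over $\mathcal P$, use boundedness to make the Balas formulation exact and to handle empty disjuncts, and treat the case $R\Delta\hat y=\mathbf 0$, where $\mathcal D_0(\Delta\hat y)$ is empty; these are details the paper's proof leaves implicit.
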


\begin{proof}
First, note that a $\Delta \hat y$ with  $(x^*,y^*) \in \intset (T^+(\Delta \hat y))$ as claimed exists by Theorem~\ref{thm:existence}. Then,
since $(x^*,y^*) \in \intset (T^+(\Delta \hat y))$, the solution $(x^*,y^*)$ strictly satisfies each of the defining inequalities of $T^+(\Delta \hat y)$.  Equivalently, $(x^*,y^*)$ does not satisfy any inequality that defines a disjunct $\mathcal D_k(\Delta \hat y)$.

Assume, for contradiction, that no inequality separating $(x^*,y^*)$ from $\mathrm{conv}\big(\bigcup_{k \in \mathcal K} (\mathcal P \cap \mathcal D_k(\Delta \hat y))\big)$ exists. Then $(x^*,y^*)$ lies in this convex hull, i.e.,
\[
(x^*,y^*) = \sum_{k \in \mathcal K} \lambda_k (x^k,y^k),
\]
for some $\lambda_k \ge 0$ with $\sum_{k \in \mathcal K} \lambda_k = 1$ and $(x^k,y^k) \in \mathcal P \cap \mathcal D_k(\Delta \hat y)$ whenever $\lambda_k > 0$. Since $(x^*,y^*)$ satisfies no single disjunct, we have $(x^k,y^k) \neq (x^*,y^*)$ for every $k$ with $\lambda_k>0$, and thus at least two of the $\lambda_k$ must be strictly positive (otherwise $(x^*,y^*) = (x^k,y^k)$ for the unique $k$ with $\lambda_k=1$, a contradiction). Hence $(x^*,y^*)$ is a proper convex combination of two or more points of $\mathcal P$, contradicting that $(x^*,y^*)$ is an extreme point of $\mathcal P$.

Thus, a separating inequality exists and the LP~\eqref{primalcglp} is infeasible for the given $(x^*,y^*)$. By LP duality, its dual \eqref{eq:CGLP} is then unbounded once the normalization constraint is dropped, and with the normalization constraint added it attains a finite optimal solution with a strictly positive objective function value, which yields the coefficients of the desired \DC.
\end{proof}

%%%%%%%%%%%%%

Theorems~\ref{thm:existence} and~\ref{thm:separation} together justify the usage of a \BC algorithm for solving problem~\eqref{bilevelProblem}. We initialize our \BC with the \ccrSLR as the relaxation $\mathcal P$ at the root node. Whenever an integer-feasible extreme point $(x^*,y^*)$ of the current node relaxation $\mathcal P$ is encountered, we need to (i) check if the point is bilevel-feasible (and thus a candidate to update the incumbent), and if not, (ii) search for a direction $\Delta \hat y \in \Z^{n_2}$ with $(x^*,y^*) \in \intset (T^+(\Delta \hat y))$ (whose existence is guaranteed by Theorem~\ref{thm:existence}) and finally (iii) obtain a violated \DC via solving the separation problem \eqref{eq:CGLP} for this $\Delta \hat y$.  This \DC is then added to $\mathcal P$, remains valid throughout the \BC subtree rooted at the current node, and $\mathcal{P}$ is resolved. A node is fathomed once its relaxation $\mathcal P$ is infeasible or its bound does not improve upon the incumbent objective function value.

In our implementation, steps (i) and (ii) are combined into the \emph{improving-direction generation} (IDG), which involves checking bilevel-feasibility and trying to obtain an improving direction $\Delta \hat y$ for a given solution $(x^*,y^*)$. If $(x^*,y^*)$ is bilevel-infeasible, by Theorem~\ref{thm:existence} the IDG is guaranteed to yield such an improving direction, thus if the IDG does not find an improving direction, this is a certificate that the solution $(x^*,y^*)$ is bilevel-feasible. The combination of the IDG and the separation using \eqref{eq:CGLP} in case that the solution is bilevel-infeasible, i.e., the entirety of steps (i), (ii) and (iii), is denoted as \emph{separation procedure}. We note that a simple way to perform the IDG consists of just solving the follower problem (see the proof of Theorem~\ref{thm:existence}), however, we also propose other ways the IDG can be performed, see Section~\ref{sec:separation} for details.

Next, we show that our proposed \BC works as intended.

\begin{Theorem}\label{thm:convergence}
Under Assumption~\ref{as:bounded}, the \BC described above solves problem~\eqref{bilevelProblem} to optimality in a finite number of iterations.
\end{Theorem}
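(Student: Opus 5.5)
The proof has two parts. Correctness: no bilevel-feasible solution is ever cut off, and the incumbent at termination is optimal. Finiteness: only finitely many branchings and cuts occur.

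\emph{Validity of the cuts.} Every DC added at a node is valid for $\mathrm{conv}\big(\bigcup_{k\in\mathcal K}(\mathcal P\cap\mathcal D_k(\Delta\hat y))\big)$. By \eqref{eq:fsubsetD}, which rests on Corollary~\ref{cor:Xdelta}, every bilevel-feasible solution in $\mathcal P$ lies in some $\mathcal D_k(\Delta\hat y)$, so no bilevel-feasible point of the current subtree is removed. Branching only partitions the integer points. Hence, by induction over the B\&C tree, the union of the leaf relaxations always contains every point of $\mathcal F$ that has not yet been fathomed by bound. When a node is fathomed by infeasibility or by bound, no better bilevel-feasible solution is lost. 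An integer extreme point that passes the IDG check is bilevel-feasible by Theorem~\ref{thm:existence}, so it is a legitimate incumbent. Therefore, at termination the incumbent is optimal, or $\mathcal F=\emptyset$ is correctly certified.

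\emph{Finiteness.} By Assumption~\ref{as:bounded}, \ccrSLR is a bounded polyhedron, so there are only finitely many integer points $(x,y)$ in it. Call this finite set $I$. Branching on integer variables with bounded domains produces a finite tree. The remaining point, which I expect to be the main obstacle, is that only finitely many cuts can be added at any single node. The argument is this. At a node, whenever an integer extreme point $(x^*,y^*)\in I$ of $\mathcal P$ is bilevel-infeasible, Theorem~\ref{thm:separation} gives a DC violated by $(x^*,y^*)$. Adding it removes $(x^*,y^*)$ permanently from $\mathcal P$ for the whole subtree, and this point is never re-added. Each separation round thus strictly decreases $|I\cap\mathcal P|$, so at most $|I|$ such rounds occur along any root-to-leaf path.

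Between these rounds the node LP may return fractional extreme points. Those are handled by branching, whose depth is bounded, and not by cuts. So it suffices to consider only cuts triggered by integer points; any heuristic cuts at fractional points must be limited to finitely many per node, which is an implementation convention I would state explicitly. Eventually each node either has an integer optimal extreme point that is bilevel-feasible, in which case the incumbent is updated and the node is fathomed, or it becomes infeasible, or its bound fails to improve on the incumbent, or all integer variables are fixed by branching. In the last case $\mathcal P$ contains at most one integer point, which after at most one cut is either feasible or removed.

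Combining the finite tree size with the finite number of cuts per node gives termination in finitely many iterations, and the validity argument gives optimality.
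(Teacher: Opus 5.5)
Your proof is correct and follows essentially the same route as the paper. Validity of every DC comes from \eqref{eq:fsubsetD} and Corollary~\ref{cor:Xdelta}, bilevel-infeasible points are never accepted thanks to Theorems~\ref{thm:existence} and~\ref{thm:separation}, and finiteness follows from the finite-depth branching tree together with the fact that each violated DC permanently removes one of the finitely many integer points of \ccrSLR.
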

\begin{proof}
Suppose the \BC terminates but returns a solution $(x^*,y^*)$ that is bilevel-infeasible. This is not possible, because by Theorem~\ref{thm:existence}, an improving direction $\Delta \hat y$ with $(x^*,y^*) \in \intset (T^+(\Delta \hat y))$ exists, and by Theorem~\ref{thm:separation} \eqref{eq:CGLP} then produces a \DC violated by $(x^*,y^*)$, so $(x^*,y^*)$ would have been cut off rather than returned as the final optimal solution.

Suppose the \BC terminates with a bilevel-feasible solution that is not optimal. This is not possible either, as by Corollary~\ref{cor:Xdelta} and the definition of our disjunctions, every \DC added throughout the \BC preserves all bilevel-feasible solutions because of~\eqref{eq:fsubsetD}.
Hence no \DC ever removes a bilevel-feasible solution from the relaxation of any node, and in particular an optimal solution of problem~\eqref{bilevelProblem} remains feasible for the relaxation of some active (i.e., non-fathomed) node until it is found or an equally good incumbent has already been recorded.

Finally, suppose the \BC does not terminate.  By Assumption~\ref{as:bounded}, \ccrSLR is a bounded polyhedron and thus the \SLR has a bounded feasible region, so branching on bounded integer variables produces a tree of finite depth. Whenever an integer solution which is not bilevel-feasible is encountered as solution at a  
node, Theorem~\ref{thm:separation} guarantees that the added \DC is violated by the integer solution that generated it, thereby removing that solution from this node relaxation. Since the bounded node relaxation contains only finitely many integer solutions, the processing of each node terminates after finitely many separation rounds. Finite depth of the tree together with finitely many DC separation rounds per node in the tree contradicts non-termination.
\end{proof}

In our \BC described above fractional extreme points $(x^*,y^*)$ of the current node relaxation $\mathcal P$ are not separated, as they can be handled by standard branching. However, in our implementation (see Section~\ref{sec:ImplementationDIG} for details), we also separate fractional extreme points to speed up the computation and strengthen the node relaxation $\mathcal{P}$. For such extreme points our separation procedure is a heuristic in the sense that we are not guaranteed to find a violated DC.

%%%%%%%%%%%%%

\section{Separation of improving-direction disjunctive cuts}\label{sec:separation}
In this section we discuss how to perform the IDG of our separation procedure, i.e., how to search for an improving direction $\Delta \hat y$ for a given solution $(x^*,y^*)$ or confirm its bilevel-feasibility. Before detailing the IDG, however, we note that it is often possible to reduce the size of the CGLP by dropping disjuncts that are not needed for the correctness of our approach. We first discuss such disjuncts in Section~\ref{section:redundantDisjunctions}, since two of the procedures we later use for IDG are built directly around identifying such  disjuncts. We then detail, in Sections~\ref{sec:sepMINLP} through~\ref{sec:sepHeuristic}, four different strategies for IDG.

%%%%%%%%%%%%%

\subsection{Redundant disjuncts}
\label{section:redundantDisjunctions}
For any $k \in \mathcal K$, the disjunct $\mathcal D_k(\Delta \hat y)$ can be dropped from $\mathcal{D}_0(\Delta \hat{y}) \cup \mathcal{D} (\Delta \hat{y}) \cup \mathcal{D}^\ell (\Delta \hat{y}) \cup \mathcal{D}^u(\Delta \hat{y})$ and thus also from the CGLP without endangering the correctness of our approach, if no bilevel-feasible solution that could still improve the objective function value upon the current incumbent lies in $\mathcal P \cap \mathcal D_k(\Delta \hat y)$, because in this way we can guarantee to not lose any relevant bilevel-feasible solution. We call such a disjunct $\mathcal D_k(\Delta \hat y)$ \emph{redundant}. Removing redundant disjuncts reduces the size of the CGLP and may even strengthen the resulting cut.
We will now discuss several options of removing redundant disjuncts from the literature. For the ease of readability, we will present these results already adapted to our setting.

First, we present a basic result of~\cite{Gaar-et-al:2024} adjusted to our specific problem setting.
\begin{Lemma}
\label{lemma:generalRedundancy}
For any $\Delta \hat y \in \Z^{n_2}$ and for each $k \in \mathcal{K}$, the disjunct $\cD_k(\Delta \hat y)$ is redundant
\begin{itemize}
    \item if $\cD_k(\Delta \hat y) \cap \cP = \emptyset$ (LP-feasibility), or    
    \item if $\cD_k(\Delta \hat y) \cap \cP \cap \mathbb{Z}^{n} = \emptyset$ (IP-feasibility), or
    \item if $\cD_k(\Delta \hat y) \cap \cP \cap \mathcal{F} = \emptyset$ 
    (bilevel-feasibility).
\end{itemize}
\end{Lemma}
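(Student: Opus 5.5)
The plan is to prove only the third condition and to get the first two from it by nesting. Recall the definition of redundancy: $\mathcal D_k(\Delta \hat y)$ is redundant if $\mathcal P \cap \mathcal D_k(\Delta \hat y)$ contains no bilevel-feasible solution that could still improve on the incumbent. Since $\mathcal F \subseteq \Z^n$ by definition,
\[
\cD_k(\Delta \hat y) \cap \cP \cap \mathcal F \subseteq \cD_k(\Delta \hat y) \cap \cP \cap \Z^n \subseteq \cD_k(\Delta \hat y) \cap \cP .
\]
So emptiness in the LP-feasibility case implies emptiness in the IP-feasibility case, and that in turn implies emptiness in the bilevel-feasibility case.

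The third case is immediate from the definition. If $\cD_k(\Delta \hat y) \cap \cP \cap \mathcal F = \emptyset$, then $\mathcal P \cap \mathcal D_k(\Delta \hat y)$ contains no bilevel-feasible solution at all. In particular it contains none that improves on the incumbent, so $\cD_k(\Delta \hat y)$ is redundant.

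To connect this with the correctness of the \BC, I would also check that dropping such a disjunct keeps the cuts valid. Let $\mathcal K' \subseteq \mathcal K$ be the set of remaining disjuncts. Take any bilevel-feasible point $(x,y)$ in $\mathcal P$ that is relevant, meaning it lies in the current node and improves on the incumbent. By~\eqref{eq:fsubsetD}, $(x,y)$ lies in some $\mathcal D_k(\Delta \hat y)$. That $k$ cannot be a dropped index, since $\mathcal D_k(\Delta \hat y) \cap \mathcal P$ contains no relevant bilevel-feasible point for a dropped $k$. Hence $(x,y) \in \bigcup_{k\in\mathcal K'} (\mathcal P\cap\mathcal D_k(\Delta \hat y))$.

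Any inequality valid for the convex hull of that union, such as one produced by~\eqref{eq:CGLP} restricted to $\mathcal K'$, therefore keeps every relevant bilevel-feasible solution. This is the property the proof of Theorem~\ref{thm:convergence} relies on. There is no real obstacle here. The only subtle point is that $\mathcal P$ carries node-local branching bounds, so the redundancy and the cut are local to the current subtree, which matches how the cuts are used.
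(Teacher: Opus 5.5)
Your proposal is correct and follows essentially the paper's reasoning. The bilevel-feasibility case is immediate from the definition of a redundant disjunct. The LP- and IP-feasibility cases follow from the nesting $\cD_k(\Delta \hat y)\cap\cP\cap\mathcal F \subseteq \cD_k(\Delta \hat y)\cap\cP\cap\Z^n \subseteq \cD_k(\Delta \hat y)\cap\cP$ given by $\mathcal F\subseteq\Z^n$, which is exactly the remark the paper makes right after the lemma (it otherwise just adapts the result from \citet{Gaar-et-al:2024}). Your extra check, that the cut from the reduced CGLP keeps every relevant bilevel-feasible point, is not needed for the lemma itself but matches the paper's closing remark of that subsection.
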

Since $\mathcal{F} \subseteq \mathbb Z^n$ (every bilevel-feasible solution is integer), each condition of Lemma~\ref{lemma:generalRedundancy} trivially implies the next one: LP-feasibility is the strongest and rarest to hold, while bilevel-feasibility is the weakest and holds most often, so ideally we would always check the latter to remove as many redundant disjuncts as possible. However, checking bilevel-feasibility exactly is as difficult as solving problem~\eqref{bilevelProblem} itself, and even checking LP-feasibility exactly requires solving an LP. The next two results give closed-form sufficient conditions for LP-feasibility that do not require solving any LP, by relaxing $\mathcal P$ to a superset that is easier to deal with.
The first result is adapted from \cite{fischetti2017new} and can be applied to all of our lower-level disjuncts $\cD_i(\Delta \hat y)$ for any $i \in \{1, \dots, m_2\}$.

\begin{Lemma}
For any $\Delta \hat y \in \Z^{n_2}$ and  $i \in \{1, \dots, m_2\}$, if $B_i \Delta \hat y \ge 0$ then the disjunct 
$\cD_i(\Delta \hat y)$ is redundant.
\label{lemma:Fischetti17redundant}
\end{Lemma}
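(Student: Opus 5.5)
We show that $\cD_i(\Delta \hat y) \cap \cP = \emptyset$ whenever $B_i \Delta \hat y \ge 0$. Redundancy then follows from the first (LP-feasibility) condition of Lemma~\ref{lemma:generalRedundancy}. As announced before the statement, we do not solve any LP. Instead we replace $\cP$ by the superset cut out by the single lower-level constraint $A_i x + B_i y \ge f_i$. This constraint is one of the rows of \eqref{ineq:linking} and hence holds for every point of $\cP$. It then suffices to show that this one inequality and the defining inequality of $\cD_i(\Delta \hat y)$ cannot hold simultaneously.

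Take any $(x,y) \in \cP \cap \cD_i(\Delta \hat y)$. Membership in $\cD_i(\Delta \hat y)$ means $-A_i x - B_i y \ge -f_i + B_i \Delta \hat y + 1$, which rearranges to
\[
A_i x + B_i y \le f_i - B_i \Delta \hat y - 1 .
\]
Combining this with $A_i x + B_i y \ge f_i$ from $\cP$ gives $f_i \le f_i - B_i \Delta \hat y - 1$, that is, $B_i \Delta \hat y \le -1$. This contradicts the hypothesis $B_i \Delta \hat y \ge 0$. Hence the intersection is empty, and the claim follows from Lemma~\ref{lemma:generalRedundancy}.

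The argument is a direct chain of inequalities and has no real obstacle. The only point that needs care is that $\cP$ genuinely contains the lower-level constraint for row $i$, including linking rows. This is true because $\cP$ is defined by \eqref{ineq:linear}, \eqref{ineq:linking}, \eqref{ineq:ybounds} and \eqref{previousCuts}, and \eqref{ineq:linking} contains every row of $Ax + By \ge f$. The argument does not use integrality, so emptiness holds even over the continuous relaxation. Note also that it shows slightly more: the disjunct is empty as soon as $B_i \Delta \hat y > -1$, and because $B$ and $\Delta \hat y$ are integer this is exactly the condition $B_i \Delta \hat y \ge 0$.
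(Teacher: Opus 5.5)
Your proof is correct and matches the paper's justification. The paper also treats this lemma as the LP-feasibility case of Lemma~\ref{lemma:generalRedundancy}, with $\cP$ relaxed to the single half-space $\{(x,y) \in \R^n : A_i x + B_i y \ge f_i\}$; your chain of inequalities makes the resulting emptiness explicit.
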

Note that for $i \in \{1, \dots, m_2\}$ Lemma~\ref{lemma:Fischetti17redundant} is the special case of LP-feasibility of Lemma~\ref{lemma:generalRedundancy} obtained by relaxing $\mathcal P$ to the single half-space $\{(x,y)  \in \R^n: A_i x + B_i y \ge f_i\}$. So, Lemma~\ref{lemma:Fischetti17redundant} is indeed a computationally cheaper alternative for checking the redundancy of lower-level disjuncts.

For the second alternative, recall that improving-direction DCs are locally valid cuts (as they depend on relaxation $\mathcal P$ at the current node of the B\&C tree), so any additional information available at the current node of the B\&C tree, in particular tighter local variable bounds, can be exploited when checking redundancy. This is exactly the idea behind the next result which is an adaptation of a result from \cite{fischetti2018use} and \cite{Gaar-et-al:2024}, to our setting.

\begin{Lemma}
\label{lemma:unifiedRedundancy}
Let $x^-,x^+ \in \Z^{n_1}$ and $y^-,y^+ \in \Z^{n_2}$ be the current lower and upper bounds on the leader and follower variables within one node of the \BC tree, respectively. 
Let $\Delta \hat y \in \Z^{n_2}$.

\begin{enumerate}
    \item \label{lemma:unifiedRedundancy:gen}
Let $k \in \mathcal{K}$ and recall from Section~\ref{sec:CGLP} that the disjunct $\mathcal D_k(\Delta \hat y)$ can be written as $G_k x + H_k y \ge s_k$ for the corresponding $G_k$, $H_k$ and $s_k$ given in Table~\ref{tab:disjunction_mapping}. 
 If
\[
\sum_{j=1}^{n_1} \max\{ G_{kj} x^-_j, G_{kj} x^+_j\} + \sum_{j=1}^{n_2} \max\{ H_{kj} y^-_j, H_{kj} y^+_j\} \le s_k -1
\]
holds, then the disjunct $\mathcal D_k(\Delta \hat y)$ is redundant.

\item \label{lemma:unifiedRedundancy:bounds}
For any $j\in \{1,\dots,n_2\} $, if $\Delta \hat y_j > \ell_j - y_j^- -1$ holds, then the lower-bound disjunct $\mathcal{D}^\ell_j(\Delta \hat y)$ is redundant and if $\Delta \hat y_j < u_j - y_j^+ +1$ holds, then the upper-bound disjunct $\mathcal{D}^u_j(\Delta \hat y)$ is redundant.

\item \label{lemma:unifiedRedundancy:linking}
For any $i \in \{1, \dots, m_2\}$, if
\[ \sum_{j=1}^{n_1} \min\{ A_{ij} x^-_{j}, A_{ij} x^+_{j}\}  + \sum_{j=1}^{n_2} \min\{ B_{ij} y^-_j, B_{ij} y^+_j\} \ge f_i - B_i \Delta \hat y \]
holds, then the lower-level disjunct $\cD_i(\Delta \hat y)$ is redundant.
\end{enumerate}
\end{Lemma}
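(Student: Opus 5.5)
The plan is to reduce all three parts to the LP-feasibility criterion of Lemma~\ref{lemma:generalRedundancy}. In each case I will show that $\mathcal D_k(\Delta \hat y) \cap \mathcal P = \emptyset$, using only the local bounds $x^- \le x \le x^+$ and $y^- \le y \le y^+$. These bounds are implied by $\mathcal P$, since they are among the local branching bounds in~\eqref{previousCuts}, or are implied by~\eqref{ineq:ybounds}. It therefore suffices to show that the box $\{(x,y): x^- \le x \le x^+,~ y^- \le y \le y^+\}$, which is a superset of $\mathcal P$, does not intersect $\mathcal D_k(\Delta \hat y)$.

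For part~\ref{lemma:unifiedRedundancy:gen}, I would take any $(x,y)$ in the box and bound the disjunct's left-hand side termwise. A linear function of a single variable on an interval attains its maximum at an endpoint, so $G_{kj}x_j \le \max\{G_{kj}x^-_j, G_{kj}x^+_j\}$, and analogously $H_{kj}y_j \le \max\{H_{kj}y^-_j, H_{kj}y^+_j\}$. Summing these bounds and applying the hypothesis gives $G_k x + H_k y \le s_k - 1 < s_k$. Hence no point of the box, and thus no point of $\mathcal P$, satisfies $G_k x + H_k y \ge s_k$. The disjunct is then redundant by the LP-feasibility bullet of Lemma~\ref{lemma:generalRedundancy}.

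Parts~\ref{lemma:unifiedRedundancy:bounds} and~\ref{lemma:unifiedRedundancy:linking} I would derive as specializations of part~\ref{lemma:unifiedRedundancy:gen} by plugging in the rows of Table~\ref{tab:disjunction_mapping}.
\begin{itemize}
\item For $\mathcal D^\ell_j(\Delta \hat y)$ we have $G_k=\mathbf 0$ and $H_k=-e_j^\top$. The left-hand side of the condition becomes $\max\{-y^-_j,-y^+_j\} = -y^-_j$, so the condition reads $-y_j^- \le -\ell_j + \Delta \hat y_j$. This is $\Delta \hat y_j \ge \ell_j - y_j^-$, which by integrality of all quantities is equivalent to $\Delta \hat y_j > \ell_j - y_j^- - 1$.
\item The upper-bound disjunct works the same way. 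Here the maximum is $y_j^+$, and the condition $y_j^+ \le u_j - \Delta \hat y_j$ is equivalent to $\Delta \hat y_j < u_j - y_j^+ + 1$.
\item For $\mathcal D_i(\Delta \hat y)$ we have $G_k=-A_i$ and $H_k=-B_i$. Using $\max\{-a,-b\} = -\min\{a,b\}$, the condition of part~\ref{lemma:unifiedRedundancy:gen} becomes $-\sum_j \min\{A_{ij}x^-_j,A_{ij}x^+_j\} - \sum_j \min\{B_{ij}y^-_j,B_{ij}y^+_j\} \le -f_i + B_i\Delta \hat y$. This is exactly the stated inequality.
\end{itemize}

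There is no real obstacle here. The only points needing care are two pieces of bookkeeping: the sign flips when converting $\max$ of negated terms into $\min$, and the use of integrality of $\Delta \hat y$, $\ell$, $u$ and $y^\pm$ to turn the strict inequalities of part~\ref{lemma:unifiedRedundancy:bounds} into the non-strict form required by part~\ref{lemma:unifiedRedundancy:gen}.
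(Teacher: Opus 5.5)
Your proof is correct and matches the paper's structure: as in the paper, parts~2 and~3 follow by substituting the corresponding rows of Table~\ref{tab:disjunction_mapping} into part~1. The only difference is that the paper just cites \citet{Gaar-et-al:2024} for part~1, whereas you give the short box-relaxation argument reducing it to the LP-feasibility case of Lemma~\ref{lemma:generalRedundancy}, which is exactly how the paper interprets the lemma right after stating it.
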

\begin{proof}
    The first part follows from the results of \cite{Gaar-et-al:2024}. 
    
    The second part on the lower and upper-bound disjuncts for $j\in \{1,\dots,n_2\}$ is a direct consequence of the first part with $k = j$ with using the fact that $(G_k,H_k,s_k) = (\mathbf{0}, -e_j^\top, -\ell_j + \Delta \hat y_j + 1)$ for $\mathcal D_j^\ell(\Delta \hat y)$ and $(\mathbf{0}, e_j^\top,  u_j - \Delta \hat y_j + 1)$ for $\mathcal D_j^u(\Delta \hat y)$. 

    Finally, the third part on the lower-level disjuncts for $i \in \{1, \dots, m_2\}$ follows from the first part with $k=i$ with $(G_k,H_k,s_k) = (-A_i, -B_i, -f_i + B_i \Delta \hat y + 1)$.
\end{proof}

Observe that Lemma~\ref{lemma:unifiedRedundancy} is the special case of LP-feasibility in Lemma~\ref{lemma:generalRedundancy} obtained by relaxing $\mathcal P$ to the box $[x^-,x^+]\times[y^-,y^+]$.

A different kind of argument, not based on emptiness of $\cD_k(\Delta \hat y) \cap \cP \cap \mathcal{F}$, but on the current incumbent bound, gives a further, complementary source of redundant disjuncts. In order to derive the result we have again adapted a result of \cite{Gaar-et-al:2024} to our setting.

\begin{Lemma}
\label{lemma:RO}
Let $\mathrm{UB}$ be the objective-function value of the best-known feasible solution for problem~\eqref{bilevelProblem}. 
Let $\Delta \hat y \in \Z^{n_2}$ and let $k \in \mathcal{K}$ such that $\mathcal P \cap \mathcal D_{k}(\Delta \hat y) \cap \mathbb{Z}^n \neq \emptyset$.
Let $\bar U_{k}$ be the optimal objective-function value of
\[
\min\{c^\top x + d^\top y : (x,y) \in \mathcal P \cap \mathcal D_{k}(\Delta \hat y) \cap \mathbb{Z}^n\}.
\]
If $\bar U_{k} \ge \mathrm{UB}$, then the disjunct $\mathcal D_{k}(\Delta \hat y)$ is redundant.
\end{Lemma}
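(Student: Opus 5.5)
The plan is to work directly from the definition of redundancy at the start of Section~\ref{section:redundantDisjunctions}. A disjunct $\mathcal D_k(\Delta \hat y)$ is redundant if $\mathcal P \cap \mathcal D_k(\Delta \hat y)$ contains no bilevel-feasible solution that could still strictly improve on the incumbent value $\mathrm{UB}$. So it suffices to show that every bilevel-feasible point $(x,y) \in \mathcal P \cap \mathcal D_k(\Delta \hat y)$ satisfies $c^\top x + d^\top y \ge \mathrm{UB}$.

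Take such a point $(x,y)$. The key step is to observe that $\mathcal F \subseteq \mathbb Z^n$, since every bilevel-feasible solution is integer. Hence $(x,y) \in \mathcal P \cap \mathcal D_k(\Delta \hat y) \cap \mathbb Z^n$, which means $(x,y)$ is feasible for the minimization problem defining $\bar U_k$. The assumption that $\mathcal P \cap \mathcal D_k(\Delta \hat y) \cap \mathbb Z^n \neq \emptyset$ ensures this problem is feasible. By Assumption~\ref{as:bounded}, $\mathcal P$ is contained in the bounded polyhedron \ccrSLR, so the feasible integer set is finite and the minimum $\bar U_k$ is attained and well defined.

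It then follows that $c^\top x + d^\top y \ge \bar U_k \ge \mathrm{UB}$. Thus no bilevel-feasible point in $\mathcal P \cap \mathcal D_k(\Delta \hat y)$ improves strictly on the incumbent, and the disjunct is redundant.

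The only point needing care is justifying that dropping such a disjunct keeps the \BC correct, i.e., that the finite-convergence and optimality argument of Theorem~\ref{thm:convergence} still holds. Solutions removed this way are never better than the recorded incumbent. Therefore an optimal solution, or one equally good that is already stored, is never lost, which is exactly the condition used in the second part of the proof of Theorem~\ref{thm:convergence}. I expect this to be a short remark rather than a genuine obstacle, since the definition of redundancy was set up precisely to allow this incumbent-based argument.
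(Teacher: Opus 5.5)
Your proposal is correct and follows essentially the paper's reasoning. The paper gives no separate proof, citing an adaptation of \cite{Gaar-et-al:2024}, but the argument it relies on is yours: the incumbent-based definition of redundancy combined with $\mathcal F \subseteq \mathbb Z^n$, plus the same closing remark that the proof of Theorem~\ref{thm:convergence} still goes through because only bilevel-feasible solutions not better than the incumbent are discarded.
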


Observe that it follows from the IP-feasibility condition of Lemma~\ref{lemma:generalRedundancy} that also in the case $\mathcal P \cap \mathcal D_{k}(\Delta \hat y) \cap \mathbb{Z}^n = \emptyset$ the disjunct $\mathcal D_{k}(\Delta \hat y)$ is redundant.

As a final note to this section, we want to mention that Theorem~\ref{thm:convergence} certifying the correctness of our \BC is still true, even when redundant disjuncts are removed by the Lemmas~\ref{lemma:generalRedundancy}, \ref{lemma:Fischetti17redundant}, \ref{lemma:unifiedRedundancy} and~\ref{lemma:RO}. 
This is the case because even though the proof of Theorem~\ref{thm:convergence} was formulated using the fact that no bilevel-feasible solution is cut away (which could happen when removing redundant disjuncts with Lemma~\ref{lemma:RO}), the proof also goes through when using the fact that no bilevel-feasible solution that is better than the current incumbent is cut away, which holds for all of our lemmas.

%%%%%%%%%%%%%
\subsection{Improving-direction generation}
\label{sec:improvingDir}
Next, we consider four different strategies for IDG, where three of them include solving a mixed-integer non-linear program (MINLP). Two MINLP-approaches based on producing many redundant disjuncts are presented in Section~\ref{sec:sepMINLP}, and one MINLP-approach involving an improving follower solution is discussed in Section~\ref{sec:sepFeasibility}. Finally,  one heuristic procedure, that is not guaranteed to produce an improving direction for a given bilevel-infeasible solution and that reuses previously found improving directions, is presented in Section~\ref{sec:sepHeuristic}.

For ease of exposition, our description in the following focuses on the case where the solution $(x^*,y^*)$ which we want to separate is feasible for the \SLR, and therefore integer, as successful separation is required only in this case for the correctness of our \BC algorithm. We note that all IDG strategies can also be called for fractional solutions feasible for \ccrSLR as a heuristic. Even though fractional solutions are inherently infeasible due to violating integrality and will eventually be cut off via branching, they may still lie in a bilevel-free set defined by an improving direction, and thus can be cut off earlier by a \DC. 

\subsubsection{Exploiting redundancy by solving MINLPs}
\label{sec:sepMINLP}
Given a solution $(x^*,y^*)$ feasible for the \SLR, along the lines of \cite{fischetti2017new} we propose two MINLPs that decide bilevel-feasibility of $(x^*,y^*)$ and yield an improving direction $\Delta \hat y$ while aiming to render many disjuncts redundant, if $(x^*,y^*)$ is bilevel-infeasible. 
The two MINLPs for IDG both use the integer variable $\Delta y \in \Z^{n_2}$ to represent an improving direction and differ only in how they encode disjunct redundancy: the first one uses continuous slack variables, the second one uses binary indicator variables. To describe them, we associate with any direction $\Delta y \in \Z^{n_2}$ the vector
\[
v(\Delta y) = \begin{pmatrix}
\max\{0, -B_i \Delta y\}_{i=1,\dots,m_2}\\
\max\{0, -\Delta y_j\}_{j=1,\dots,n_2} \\
\max\{0, \Delta y_j\}_{j=1,\dots,n_2} \end{pmatrix} \in \R^{m_2 + 2n_2}_{\ge 0},
\]
whose entries are zero when the corresponding disjunct $\mathcal{D}_i(\Delta y)$, $\mathcal{D}_j^\ell(\Delta y)$ or $\mathcal{D}_j^u(\Delta y)$ is redundant. 

\paragraph{Slack-MINLP}
In our first approach for IDG the auxiliary slack variables $t_i$ for $i \in \{1,\dots,m_2\}$, and $r_j$, $s_j$ for $j\in\{1,\dots,n_2\}$, are continuous and equal to zero when the corresponding disjunct $\mathcal{D}_i(\Delta  y)$, $\mathcal{D}_j^\ell(\Delta  y)$ and $\mathcal{D}_j^u(\Delta  y)$ is redundant, i.e., when $B_i \Delta  y \ge 0$, $\Delta  y_j \geq 0$, and $\Delta  y_j \leq 0$ holds, respectively (Lemma~\ref{lemma:Fischetti17redundant} and Lemma~\ref{lemma:unifiedRedundancy}.\ref{lemma:unifiedRedundancy:bounds} with $y^- = \ell$ and $y^+ = u$). We then search for an improving direction $\Delta y$ with a small $L_1$-norm of $v(\Delta y)$ with the MINLP
\begin{subequations}
\label{eq:NewDeltayHat_1}
\begin{alignat}{3}
\min ~\sum_{i=1}^{m_2}  t_i + \sum_{j=1}^{n_2} (r_j+s_j) \quad \qquad \label{eq:NewDeltayHat_1_Obj} &&\\
\st \Delta y^\top  R \Delta y + 2 \Delta y^\top  R y^*  { + g^\top  \Delta y}  & \le -1  && \label{eq:NewDeltayHat_1_OFviol}\\
- B_i \Delta y &  \le t_i,  && \forall i =1, \dots, m_2   \label{eq:NewDeltayHat_1_FolCon_0} \\
 -B_i \Delta y & \le A_i x^* + B_i y^*  - f_i &\qquad &  \forall  i =1, \dots, m_2    \label{eq:NewDeltayHat_1_FolCon_Feas} \\
 -   \Delta y_j  & \le r_j  && \forall j=1, \dots, n_2 \label{eq:NewDeltayHat_1_Low_0} \\
 -   \Delta y_j  & \le  y^*_j - \ell_j && \forall  j=1, \dots, n_2 \label{eq:NewDeltayHat_1_Low_Feas} \\
 \Delta y_j  & \le s_j && \forall  j=1, \dots, n_2    \label{eq:NewDeltayHat_1_Up_0} \\
 \Delta y_j  & \le u_j - y^*_j  && \forall  j=1, \dots, n_2   \label{eq:NewDeltayHat_1_Up_Feas} \\
 \Delta y & \text{ integer},   (t,r,s) \ge 0,  \label{eq:NewDeltayHat_1_VarDomain}
\end{alignat}    
\end{subequations}
where at an optimal solution $t_i = \max\{0, -B_i \Delta y\}$, $r_j = \max\{0, -\Delta y_j\}$ and $s_j = \max\{0, \Delta y_j\}$ hold, so the objective function~\eqref{eq:NewDeltayHat_1_Obj} minimizes  $\|v(\Delta y)\|_1$, which serves as a convex surrogate for the number of non-redundant disjuncts.
Furthermore, the non-convex quadratic constraint~\eqref{eq:NewDeltayHat_1_OFviol} makes sure that $\Delta y$ is an improving direction and
the constraints~\eqref{eq:NewDeltayHat_1_FolCon_Feas}, and \eqref{eq:NewDeltayHat_1_Low_Feas} and~\eqref{eq:NewDeltayHat_1_Up_Feas} ensure that $y^* + \Delta y $ is a feasible solution of the follower problem (given that $y^*$ is integer, which is the only case where we need to make sure that an improving direction is found for the correctness of our approach). 

Moreover, the constraints~\eqref{eq:NewDeltayHat_1_FolCon_0}, \eqref{eq:NewDeltayHat_1_Low_0} and~\eqref{eq:NewDeltayHat_1_Up_0} allow the variables $t$, $r$ and $s$ to be zero if and only if the condition for redundancy of the corresponding disjunct is satisfied. 
In particular, $t_i=0$ forces $-B_i \Delta y \le 0$ via~\eqref{eq:NewDeltayHat_1_FolCon_0}, so the $i$-th disjunct $\mathcal D_i(\Delta y)$ is then redundant by Lemma~\ref{lemma:Fischetti17redundant}. Similarly, $r_j=0$ forces $\Delta y_j \ge 0$, and $s_j=0$ forces $\Delta y_j \le 0$, so the corresponding lower-bound or upper-bound disjunct for component $j$ is redundant by Lemma~\ref{lemma:unifiedRedundancy}.\ref{lemma:unifiedRedundancy:bounds}.

\paragraph{Binary-MINLP} 
In our second approach for IDG, the binary variables $\tau_i$ for $i \in \{1,\dots,m_2\}$, and $\rho_j$, $\sigma_j$ for $j\in\{1,\dots,n_2\}$ play a similar role as $t_i$, $r_j$, and $s_j$ above: a value of zero indicates that the corresponding disjunct is redundant. 
We now search for an improving direction $\Delta y$ that renders as many disjuncts as possible redundant, this time by using multiplicative constraints instead of additive slacks to enforce this, with the MINLP
\begin{subequations}
\label{eq:NewDeltayHat_2}
\begin{alignat}{3}
\min ~ \sum_{i=1}^{m_2}  \tau_i  + \sum_{j=1}^{n_2} (\rho_j+\sigma_j) \qquad\quad && \label{eq:NewDeltayHat_2_Obj}\\
\st  \Delta y^\top  R \Delta y + 2 \Delta y^\top  R y^* + g^\top  \Delta y & \le -1  && \label{eq:OFviol_1}\\
 -B_i \Delta y & \le (A_i x^* + B_i y^*  - f_i) \tau_i &\qquad&  \forall i =1, \dots, m_2    \label{eq:NewDeltayHat_2_FolCon_Feas} \\
-   \Delta y_j  & \le  (y^*_j - \ell_j)\rho_j &&  \forall j=1, \dots, n_2    \label{eq:3_1} \\
 \Delta y_j  & \le (u_j - y^*_j)\sigma_j  &&  \forall j=1, \dots, n_2   \label{eq:2_1} \\
 \Delta y & \text{ integer},   (\tau,\rho,\sigma) \text{ binary},
\end{alignat}    
\end{subequations}
where at an optimal solution $\tau_i = 0$ if and only if $B_i \Delta y \ge 0$, $\rho_j = 0$ if and only if $\Delta y_j \ge 0$, and $\sigma_j = 0$ if and only if $\Delta y_j \le 0$ hold, so the objective function~\eqref{eq:NewDeltayHat_2_Obj} minimizes the number of nonzero entries of $v(\Delta y)$. Thus, in contrast to the $L_1$-surrogate of~\eqref{eq:NewDeltayHat_1}, the MINLP~\eqref{eq:NewDeltayHat_2} exactly maximizes the number of redundant disjuncts.
The reasoning behind the redundancy checks in~\eqref{eq:NewDeltayHat_2} is analogous to the one of~\eqref{eq:NewDeltayHat_1} transformed to the multiplicative setting.
Note that the improving-direction constraint~\eqref{eq:OFviol_1} is again non-convex quadratic in $\Delta y$. Together with the remaining constraints, it ensures that $(x^*,y^*) \in T(\Delta y)$, i.e., that $y^* + \Delta y$ is a better follower response for $x^*$ than $y^*$ (if $y^*$ is integer).

By construction, a feasible solution to either MINLP formulation directly yields a $\Delta \hat y$ with $q(y^*+\Delta \hat y) < q(y^*)$ and $y^*+\Delta \hat y$ feasible for the follower problem at $x^*$, i.e., an improving direction $\Delta \hat y$ witnessing that $(x^*,y^*)$ is bilevel-infeasible. Note, that the integrality of both $R$ and $g$ is essential for taking into account the improving-direction constraints~\eqref{eq:NewDeltayHat_1_OFviol} and~\eqref{eq:OFviol_1} in a correct way. If these MINLPs are infeasible, then there is no improving direction for $(x^*,y^*)$, implying that it is a bilevel-feasible solution. Note that such an observation is also made in \citet{Battista-Ralphs:2025} for mixed-integer bilevel linear problems.

%%%%%%%%%%%%%

 \subsubsection{Using an improving follower solution obtained by solving an MINLP}
 \label{sec:sepFeasibility}
Instead of directly searching for an improving direction for IDG, one could search for an improving follower solution for a given solution $(x^*,y^*)$ feasible for the \SLR, i.e., a better feasible follower response than $y^*$ with respect to $x^*$. If such a solution~$\hat y$ exists, an improving direction is given as 
\begin{align}\label{eq:formulahaty}
    \Delta \hat y = \hat y - y^*.
\end{align} 
Hence, instead of solving one of the above MINLPs for IDG, one can search for a feasible follower solution~$\hat y$ such that
\begin{align}\label{eq:yStarBetter}
    q(y^*) > q(\hat y) 
\end{align}
holds.
This can be done by solving the follower problem, which is also an MINLP, to optimality, or until we find a feasible follower solution which satisfies~\eqref{eq:yStarBetter}, see \cite{Gaar-et-al:2024} for more details. Finally, if the optimal follower objective value $\Phi(x^*)=q(y^*)$, then $(x^*,y^*)$ is bilevel-feasible by definition, so also when using this follower-problem-based approach for IDG the bilevel-feasibility check is already included implicitly.

%%%%%%%%%%%%%

\subsubsection{Using a heuristic approach by utilizing previous improving directions}
\label{sec:sepHeuristic}
The last approach we consider for IDG for a given solution $(x^*,y^*)$ feasible for the \SLR is a heuristic method in the sense that we are not guaranteed to find an improving direction if one exists, unlike all previously described IDG approaches, where such a guarantee holds. In our heuristic method we re-use
previously generated improving directions $\Delta \hat y$ for IDG, by employing an \emph{improving-direction pool}, which we denote by $\mathcal{C}$. 
Given a non-empty pool $\mathcal{C}$, one can iterate through the directions of the pool to detect a feasible and improving one. For a direction $\Delta y' \in \mathcal{C}$, this involves

\begin{enumerate}
\item checking the feasibility of $y^*+\Delta y'$ for the follower problem at $x^*$, and 
\item verifying that $\Delta y'$ is an improving direction at $(x^*,y^*)$, i.e., ${\Delta y'} ^\top R \Delta y' + 2 \Delta y'^\top R y^* + g^\top \Delta y' \le -1$.
\end{enumerate}

Any direction that satisfies conditions 1.\ and 2.\ is a potential output of our IDG and can be employed to continue the separation procedure by solving~\eqref{eq:CGLP}. If no such direction is present, then one can proceed with one of the exact approaches described previously and then add this newly found improving direction to the pool~$\mathcal{C}$. Also in this approach any improving direction is a certificate for bilevel-infeasibility of $(x^*,y^*)$ and thus an explicit bilevel-feasibility check is not necessary.

%%%%%%%%%%%%%
\section{Algorithmic details}\label{sec:implementation}

In this section, we describe the implementation details of the \BC algorithm introduced in Section~\ref{sec:BC}, including the IDG strategy in Section~\ref{sec:ImplementationDIG}, disjunct redundancy checks in Section~\ref{sec:redundancy_check}, details on constructing and solving the CGLP in Section~\ref{sec:implCGLP} and maintaining the improving-direction pool in Section~\ref{sec:implPool}. %

\subsection{Improving-direction generation strategy}
\label{sec:ImplementationDIG}

Within the separation procedure at a \BC node, we first have to determine which improving direction~$\Delta\hat y$ to use.
We consider the following main IDG strategies based on the findings of Section~\ref{sec:improvingDir} for our B\&C algorithm: 

\begin{itemize}
    \item \texttt{S} : Solve the Slack-MINLP~\eqref{eq:NewDeltayHat_1} to directly obtain $\Delta\hat y$. 
\item \texttt{B}: Solve the Binary-MINLP~\eqref{eq:NewDeltayHat_2} to directly obtain $\Delta\hat y$.
\item  \texttt{F} : Solve the follower problem~\eqref{llProblem} to get $\hat{y}$, and then calculate $\Delta\hat y$ via~\eqref{eq:formulahaty}.
    
\end{itemize}

For each IDG strategy, the associated MINLP~\eqref{eq:NewDeltayHat_1}, \eqref{eq:NewDeltayHat_2} or~\eqref{llProblem} can be solved either optimally or greedily (i.e., by stopping the exact solution process at the first solution found for~\eqref{eq:NewDeltayHat_1}, \eqref{eq:NewDeltayHat_2} and the first solution with better objective function value than $y^*$ for~\eqref{llProblem}). We denote these variants by the additional letters \texttt{O} and \texttt{G}, respectively. Regarding the greedy variant, we note that any feasible solution to~\eqref{eq:NewDeltayHat_1} or~\eqref{eq:NewDeltayHat_2} yields an improving direction. When solving~\eqref{llProblem} greedily in \texttt{FG}, we impose an upper cutoff strictly smaller than $q(y^*)$ on the objective function value to ensure that the obtained solution is improving.
We also emphasize that, following preliminary experiments and the findings reported in \cite{Gaar-et-al:2024}, we do the DC separation procedure for both integer and fractional solutions $(x^*,y^*)$ encountered during the \BC, even if for correctness we only need the separation procedure for integer solutions. In particular, at the root node of the \BC tree we continue adding DCs until no more significantly violated cuts can be found. Note that we employ a normalized violation threshold of $10^{-5}$ throughout the algorithm. At all other \BC nodes, we limit the number of separation rounds to one for fractional solutions (and for integer solutions we of course have no limit to ensure correctness).

\subsection{Identifying redundant disjuncts}
\label{sec:redundancy_check}
Next, we describe how we deal with disjuncts that are redundant for our chosen $\Delta\hat y$ due to results in Section~\ref{section:redundantDisjunctions}.
In our baseline simple setting \texttt{RS}, we employ a small redundancy-check procedure based on Lemmas~\ref{lemma:Fischetti17redundant} and~\ref{lemma:unifiedRedundancy}:
\begin{itemize}
\item If $B_i \Delta \hat y \ge 0$, then the associated lower-level disjunct $\cD_i(\Delta \hat y)$ is redundant by Lemma~\ref{lemma:Fischetti17redundant}.
\item If $\sum_{j=1}^{n_1} \min\{ A_{ij} x^-_{j}, A_{ij} x^+_{j}\}  + \sum_{j=1}^{n_2} \min\{ B_{ij} y^-_j, B_{ij} y^+_j\} \ge f_i - B_i \Delta \hat y$, then the associated lower-level disjunct $\cD_i(\Delta \hat y)$ is redundant by Lemma~\ref{lemma:unifiedRedundancy}.\ref{lemma:unifiedRedundancy:linking}.
\item If $\Delta \hat y_j \le u_j-y^+_j$, then the associated upper-bound disjunct $\cD_j^u(\Delta \hat y)$ is redundant by Lemma~\ref{lemma:unifiedRedundancy}.\ref{lemma:unifiedRedundancy:bounds}.
\item If $\Delta \hat y_j \ge \ell_j-y^-_j$, then the associated lower-bound disjunct $\cD_j^\ell(\Delta \hat y)$ is redundant by Lemma~\ref{lemma:unifiedRedundancy}.\ref{lemma:unifiedRedundancy:bounds}.
\end{itemize}
These checks can be performed efficiently by simple inspection. 

We have also implemented a more time-consuming large redundancy check, namely the optimality-based removal strategy based on Lemma~\ref{lemma:RO}. This strategy corresponds to the strategy \texttt{RO} in \cite{Gaar-et-al:2024} and we denote it as \texttt{RL} in our work. We note that this strategy involves solving an ILP for every disjunct which we want to check. As already discussed above, in contrast to \cite{Gaar-et-al:2024}, due to the bilevel-free set we are using, we have many more disjuncts, namely one more for each upper-bound and lower-bound constraint on $y$ and one more for each lower-level constraint that is not a linking constraint. Thus directly using this \texttt{RL}-strategy is very time consuming, even if we only check disjuncts using Lemma~\ref{lemma:RO} when the simple strategies failed for this disjunct. 

For this reason, we also consider the medium strategy \texttt{RM} that is less computationally demanding than \texttt{RL}. 
In this strategy, only the objective disjunct $\mathcal{D}_0(\Delta \hat y)$ and the lower-level disjuncts $\mathcal{D}_i(\Delta \hat y)$ for $i \in \{1,\dots,m_2\}$ are tested for redundancy based on Lemma~\ref{lemma:RO} via solving ILPs, while redundancy checks for the lower-bound disjuncts $\mathcal{D}_j^\ell(\Delta \hat y)$ and the upper-bound disjuncts $\mathcal{D}_j^u(\Delta \hat y)$ are skipped. This strategy is motivated by preliminary computations, which showed that for the lower-bound and upper-bound disjuncts the more time-consuming check via ILP rarely determined the disjuncts to be redundant, i.e., the simple checks had nearly the same detection rate as the check via ILP. We provide some results about this in Section~\ref{section:convexResults}.

\subsection{Constructing and solving the CGLP}
\label{sec:implCGLP}

After performing the redundancy tests for the disjuncts, the next step is to build and solve the CGLP, which is based on the current \BC node relaxation $\mathcal{P}$ described in the beginning of Section~\ref{sec:BC}. In particular,~$\mathcal{P}$ can contain branching decisions at the current node and DCs added at ancestor nodes by incorporating them into $P$, $Q$ and $r$ within~\eqref{previousCuts} when defining $\mathcal{P}$.
We note that, although including the DCs added at ancestor nodes in the CGLP through $\mathcal{P}$ is theoretically required to guarantee successful separation of bilevel-infeasible integer solutions, it is not mandatory for purely binary problems. In the latter case, any integer solution of the node LP is an extreme point of $\mathcal{P}$, independent of whether $\mathcal{P}$ is defined with or without the DCs added at ancestor nodes.
Conversely, for instances with general integer variables, it is in principle possible that a bilevel-infeasible integer solution $(x^*,y^*)$ fails to be an extreme point of $\mathcal{P}$ when previous DCs are not added to~$\mathcal{P}$. In such a situation, the CGLP may not yield a DC that is violated at $(x^*,y^*)$. 
Preliminary computations indicated that, although incorporating previous DCs into $\mathcal{P}$ is numerically more robust and sometimes yields stronger DCs, it may adversely affect the overall performance due to the need to construct and solve substantially larger CGLPs.
To balance these considerations, we adopt the following strategy: 
\begin{itemize}
    \item All previously generated DCs are included in $\mathcal{P}$ in the CGLP at the root node of the B\&C tree. 
    \item At other nodes with bilevel-infeasible solutions (recall that this means the solutions are integer), we first try separation via solving the CGLP without adding the previously generated DCs to $\mathcal{P}$. Only if this separation fails and does not produce a violated DC, we call the separation again, but this time with the previously generated DCs added to $\mathcal{P}$.
    \item At non-root nodes with fractional solutions, we try separation via solving the CGLP without adding the previously generated DCs to $\mathcal{P}$. If this separation fails, we continue branching, unless we use option \texttt{H} and employ an improving-direction pool which contains other potential improving directions, in which case we proceed to the next feasible direction of the pool and try separation again.
\end{itemize}

After having determined $\mathcal{P}$, we are now ready to describe the details of the construction of the CGLP.
For each disjunct that is identified as redundant, we remove the corresponding variables and constraints from the CGLP. Thus,  we construct \eqref{eq:CGLP} using only the non-redundant disjuncts in $\mathcal{K}$ and have to solve smaller LPs for deriving DCs.

Note that in the following special cases, we skip constructing and solving the CGLP entirely:

\begin{itemize}
    \item Every disjunct is found to be redundant. In this situation, we prune the \BC node by inserting a dummy violated inequality ($0x+0y\leq -1$).
    \item All disjuncts except for the objective disjunct $\mathcal{D}_0(\Delta \hat y)$ are redundant. In that case, we directly introduce its inequality $ 2{\Delta \hat y} ^\top R y \geq - \Delta \hat y ^\top R\Delta \hat y - g^\top \Delta  \hat y$ into the \BC node relaxation $\mathcal P$.
\end{itemize}

Finally, we want to briefly mention that in our \BC, we also try to separate integer solutions that are produced by the \BC MILP solver's primal heuristics. Naturally, such solutions are not necessarily extreme points of $\mathcal{P}$ when considering integer (and not binary) instances, thus we are not guaranteed to find a violated \DC for them. If our separation fails to produce a violated \DC for such a solution even after including previously generated \DCs in $\mathcal{P}$, we reject it as an incumbent, to preserve the correctness of the \BC.

\subsection{Maintaining the improving-direction pool for heuristic DC generation}
\label{sec:implPool}

In our computational study, an additional suffix \texttt{H} in a setting indicates an improving-direction pool $\mathcal{C}$ is employed for determining the improving direction $\Delta \hat y$ for a given $(x^*, y^*)$ heuristically. 
This pool $\mathcal{C}$ is initialized with $\pm 1$ unit vectors of length $n_2$. Every time we solve an IDG problem, the pool is (possibly) expanded by the new improving directions $\Delta \hat y$ found while solving the corresponding MINLP, i.e.,~\eqref{eq:NewDeltayHat_1},~\eqref{eq:NewDeltayHat_2} and~\eqref{llProblem} for \texttt{S}, \texttt{B} and \texttt{F}, respectively. In particular, if the IDG problem is solved optimally in setting \texttt{O}, all the improving directions found during the solution procedure are used to expand the pool. Otherwise, if they are solved greedily in setting \texttt{G}, the first improving direction found feeds the pool.

We use a capacitated pool having at most $C$ elements. Once the capacity $C$ is reached, the improving directions with the lowest utilization percentages (ratio of how many times this  improving direction has produced a violated DC so far to the total number of trials) are replaced by the newly introduced ones. 
Moreover, we employ a diversification strategy for selecting directions from the pool $\mathcal{C}$ as $\Delta \hat y$: each time the heuristic IDG is called, the search begins from the improving direction immediately following the one that was most recently used to generate a DC. This proved helpful in preliminary computations, compared to e.g., always iterating through the pool from the start.

For separating fractional solutions only the pool-based heuristic separation is executed without being followed by the computationally more costly exact separation. The only exception is the root node, at which we perform also exact separation once the pool has been exhaustively explored, even for a fractional solution. 

An additional strategy we examine is the simultaneous addition of multiple DCs derived from different~$\Delta \hat y$ obtained within IDG for a single $(x^*,y^*)$ at once, which will be denoted by the suffix \texttt{M}. Based on an initial analysis, we propose the following rules to be applied when the multiple-DC strategy is enabled: $(i)$ For the \texttt{SO}, \texttt{BO} and \texttt{FO} configurations, each time we solve the corresponding IDG problem, we introduce one DC for every distinct improving direction associated with the solutions stored by the MINLP solver; $(ii)$ If the heuristic IDG option \texttt{H} is also enabled, the scanning of the pool is stopped once $\texttt{maxIntHeurCuts}=10$ cuts have been added at integer nodes, and $\texttt{maxFracHeurCuts}=5$ cuts have been added at non-root fractional nodes. At the root node of the \BC tree, no such bounds are enforced.

%%%%%%%%%%%%%

\section{Computational study}\label{sec:results}

In the following parts, we evaluate our \BC framework on 640 test instances across seven problem classes. The implementation was done in C++ using CPLEX 22.1 as the MILP and MINLP solver, with its optimality target parameter set to global optimality when solving MINLPs that include non-convex quadratic terms arising in IDG. All settings of CPLEX were left on their default values, except the feasibility tolerance parameter which is reduced from $10^{-6}$ to $10^{-8}$ while solving \eqref{eq:CGLP}, to mitigate numerical difficulties. The computations were performed on a single core of an Intel Xeon X5570 machine with 2.93 GHz processor and~8~GB of RAM, and the time limit for each run was set to 600~seconds.
We first introduce the testbed in Section~\ref{sec:instances}, tune key algorithmic components like IDG strategies, redundancy checks and DC generation on instances with convex follower objective function, and compare our best configurations against state-of-the-art methods from the literature in Section~\ref{section:convexResults}. Finally, we assess performance on instances with non-convex follower objective functions in Section~\ref{sec:CompNonConv}.

%%%%%%%%%%%%%
\subsection{Test instances}
\label{sec:instances}

Our experimental testbed comprises 640 test instances partitioned into seven distinct problem sets, incorporating both established benchmark sets and new problem classes. Table~\ref{tab:instances} summarizes the main characteristics of all instance sets. All instances can be accessed publicly at \url{https://msinnl.github.io/pages/instancescodes.html}\footnote{Instances will be uploaded upon publication of the paper.}.

\begin{table}[htb]
    \centering \small
    \caption{Summary of characteristics of test instance sets}
    \label{tab:instances}
    \begin{tabular}{lllrr}
    \hline
    Instance set & $q(y)$ & Variable type & $m_2$ & $\#$ \\
    \hline
    \texttt{QBCov-1}             & Convex     & Binary  & 1         & 40  \\
    \texttt{QBCov-2}             & Convex     & Binary  & 2         & 40  \\
    \texttt{QBMKP-1}   & Convex     & Binary  & 1         & 100 \\
    \texttt{QBMKP-2} & Convex     & Binary  & 2         & 100 \\
    \texttt{QBMKPInt}& Convex     & Integer & 1         & 100 \\
    \texttt{QBCovMulti}          & Convex     & Binary  & $\in\{3,4\}$ & 80  \\   
    \texttt{BNQKP}                & Non-convex & Integer & 1         & 180 \\
    \hline
    \end{tabular}
\end{table}

\paragraph{Main benchmark instances (\texttt{QBCov} and \texttt{QBMKP})} 
In our main benchmark, we consider instances with a convex follower objective function ($q(y) = y^\top R y$ with positive semidefinite $R$ and $g=\mathbf{0}$), as this problem class already was studied previously. We use five instance sets also considered in \cite{Gaar-et-al:2024} with 380 instances in total. The \texttt{QBCov} test instance sets comprise 80 binary covering instances with $n = n_1 + n_2 \in \{20, 30, 40, 50\}$ binary leader and follower variables, $m_1 \in \{0,1\}$ pure leader constraints, and $m_2 \in \{1, 2\}$ lower-level constraints (separated in \texttt{QBCov-1} and \texttt{QBCov-2}), which are all linking constraints. The \texttt{QBMKP} family consists of 300 instances derived from 0--1 multiple knapsack instances in the SAC-94 library~\citep{sac94}, with 10 to 105 items partitioned between leader and follower. This family includes two sets with binary leader and follower variables \texttt{QBMKP-1} and \texttt{QBMKP-2} with $m_2=1$ and $m_2=2$, respectively, as well as one set \texttt{QBMKPInt}, where integer leader and follower variables take values in $\{0, \dots, 5\}$ and $m_2 = 1$. Also in this set, all lower-level constraints are linking constraints. We refer the reader to \cite{Gaar-et-al:2024} for details regarding their generation.

\paragraph{Multiple linking constraints extension instances (\texttt{QBCovMulti})} 
To evaluate algorithmic behavior under denser linking structures, we generate \texttt{QBCovMulti}, an additional set of 80 binary covering instances. These instances extend the \texttt{QBCov} family by incorporating multiple ($m_2 \in \{3, 4\}$) lower-level constraints, which are all linking constraints, while maintaining a convex follower objective function and binary leader and follower variables.

\paragraph{Bilevel non-convex quadratic knapsack problem (BNQKP) instances (\texttt{BNQKP})}
Finally, we construct the set \texttt{BNQKP} of 180 bilevel quadratic 0--1 knapsack problem instances featuring a non-convex follower objective function. This instance set is derived from the nonlinear bilevel knapsack problem instances with quadratic leader objective function in the work of \cite{BeheshtiOZP15}, following the approach of \cite{zenarosa2021exact} and have the general BNQKP form
\begin{align*}
\max_{x\in \mathbb{Z}} \quad & c_u y- x \\
\st \quad & \underline{x} \le x \le \overline{x} \\
                  & y \in \arg\max_{y' \in \{0,1\}^{n_2}} \left\{ y'^\top Q y' + d_\ell^\top y' : w^\top y' \le x \right\}.
\end{align*} 

In our experimental study, we consider a subset of the generated BNQKP instances defined by knapsack sizes \(n_2 \in \{25, 50\}\) and a non-diagonal profit matrix \(Q\). This subset comprises 180 instances that differ in at least one of the following: the density of the off-diagonal entries of $Q$, the correlation between item weights and the follower’s profits, and the maximum magnitude of the quadratic coefficients. We refer the reader to the works of \cite{BeheshtiOZP15} and \cite{zenarosa2021exact} for further details.

%%%%%%%%%%%%%%%%%%%

\subsection{Results for instances with convex follower objective function}
\label{section:convexResults}

In this section, we first present numerical results regarding fine tuning our algorithmic settings and then provide the results of a numerical study where we examine the performance of the DC based B\&C method we propose against a state-of-the-art method from literature. All instances addressed in this section involve a convex quadratic follower objective function.

\paragraph{Tuning the algorithmic settings}
\citet{Gaar-et-al:2024} showed that solving the follower problem greedily is a more efficient strategy than solving it to optimality. In our preliminary study, for each of the IDG strategies, we examine the combined effect of $(i)$ solving the IDG problem either optimally or greedily, and $(ii)$ the level of effort devoted to redundancy checks. These aspects may be tightly linked, particularly in the \texttt{S} and~\texttt{B} settings for IDG, where the objective of the corresponding MINLP is to maximize (via an $L_1$-surrogate, or exactly, respectively) the number of redundant disjuncts. When these MINLPs are solved greedily (i.e., not necessarily optimally), additional redundancy checks can either be beneficial or end up more expensive than solving the MINLP optimally. To investigate this interaction, we compare the greedy \texttt{G} and optimal \texttt{O} variants for each of the three IDG strategies \texttt{S}, \texttt{B} and \texttt{F}, under the three redundancy-check configurations \texttt{RS}, \texttt{RM}, and \texttt{RL}.

We present in Table~\ref{tab:redundancy_check} the average results over 414 (out of 460) instances that are identified as feasible, in terms of the following measures: 
the number of instances solved to optimality out of 414 (\texttt{nSol}),
the final percent optimality gap (\texttt{Gap}),
the runtime in seconds ($t$),
the time spent for redundancy check ($t_R$), 
the percentage of the checks that resulted in removing the disjunct (\texttt{red}(\%)),
the percentage of the ILP-based checks that resulted in removing the disjunct ($\texttt{red}_{\texttt{RL}}(\%)$),
the number of \BC nodes (\texttt{nBCN}), 
the number of DCs separated at integer solutions (\texttt{nICut}), and 
the number of DCs separated at fractional solutions (\texttt{nFCut}). The gap is computed as $\texttt{Gap} = 100 \cdot (\texttt{UB} - \texttt{LB}) / |\texttt{UB}|$, where \texttt{UB} and \texttt{LB} denote the incumbent value and the dual bound at termination, respectively, and is set to $100\%$ if no bilevel-feasible solution is found.

% Table generated by Excel2LaTeX from sheet 'summary'
\begin{table}[tbh]
  \centering \footnotesize
  \caption{Results over 414 feasible instances}
\begin{tabular}{rlcrrrrrrrr}
\hline
\multicolumn{1}{l}{\texttt{Setting}} & \multicolumn{1}{c}{\texttt{Red.\,check}} & \multicolumn{1}{c}{\texttt{nSol}} & \multicolumn{1}{c}{\texttt{Gap}} & \multicolumn{1}{c}{$t$} & \multicolumn{1}{c}{$t_R$} & \multicolumn{1}{c}{\texttt{red}($\%$)} & \multicolumn{1}{c}{$\texttt{red}_{\texttt{RL}}(\%)$} & \multicolumn{1}{c}{\texttt{nBCN}} & \multicolumn{1}{c}{\texttt{nICut}} & \multicolumn{1}{c}{\texttt{nFCut}} \\
\hline
\multicolumn{1}{l}{\texttt{FO}} & \texttt{RS}    & 235   & 26.4  & 295.3 & 0.0   & 84.4  & 0.0   & 3176.9 & 262.1 & 2388.9 \\
      & \texttt{RM}   & 243   & 26.6  & 285.2 & 8.7   & 85.8  & 32.1  & 2748.3 & 261.8 & 2059.7 \\
      & \texttt{RL}    & 241   & 26.7  & 286.4 & 20.5  & 86.4  & 20.8  & 2495.5 & 251.4 & 1889.1 \\
\hline
\multicolumn{1}{l}{\texttt{FG}} & \texttt{RS}    & 290   & 20.0  & 211.5 & 0.0   & 85.1  & 0.0   & 5953.0 & 595.6 & 4677.1 \\
      & \texttt{RM}   & 294   & 19.9  & 207.8 & 17.0  & 86.2  & 29.1  & 4998.3 & 603.5 & 3939.3 \\
      & \texttt{RL}    & 290   & 20.6  & 211.4 & 37.7  & 86.7  & 19.1  & 4327.1 & 566.1 & 3437.1 \\
      \hline
\multicolumn{1}{l}{\texttt{S}\texttt{O}} & \texttt{RS}    & 361   & 8.5   & 104.9 & 0.0   & 90.9  & 0.0   & 3602.4 & 64.1  & 3096.7 \\
      & \texttt{RM}   & 360   & 8.8   & 103.0 & 7.1   & 91.5  & 16.4  & 3130.1 & 58.0  & 2695.2 \\
      & \texttt{RL}    & 361   & 8.7   & 102.3 & 9.6   & 91.8  & 15.1  & 2988.5 & 56.5  & 2568.7 \\
\hline
\multicolumn{1}{l}{\texttt{S}\texttt{G}} & \texttt{RS}    & 350   & 10.0  & 108.7 & 0.0   & 90.5  & 0.0   & 8148.2 & 147.4 & 7236.2 \\
      & \texttt{RM}   & 355   & 8.5   & 105.3 & 25.1  & 91.4  & 20.5  & 5893.3 & 101.2 & 5169.9 \\
      & \texttt{RL}    & 354   & 8.3   & 106.0 & 30.5  & 91.6  & 18.9  & 5509.1 & 96.3  & 4824.5 \\
\hline
\multicolumn{1}{l}{\texttt{B}\texttt{O}} & \texttt{RS}    & 352   & 9.9   & 119.7 & 0.0   & 91.2  & 0.0   & 3223.2 & 71.3  & 2760.2 \\
      & \texttt{RM}   & 359   & 8.9   & 112.1 & 6.4   & 92.0  & 18.7  & 2592.4 & 52.7  & 2204.0 \\
      & \texttt{RL}    & 360   & 9.0   & 112.6 & 8.2   & 92.2  & 17.6  & 2552.4 & 52.8  & 2171.5 \\
\hline
\multicolumn{1}{l}{\texttt{B}\texttt{G}} & \texttt{RS}    & 349   & 10.6  & 111.5 & 0.0   & 90.4  & 0.0   & 6799.6 & 122.1 & 5982.9 \\
      & \texttt{RM}   & 355   & 9.3   & 107.0 & 18.4  & 91.2  & 19.8  & 5024.2 & 88.9  & 4369.1 \\
      & \texttt{RL}    & 357   & 9.4   & 108.5 & 25.6  & 91.5  & 18.0  & 4650.7 & 84.9  & 4025.8 \\
\hline
\end{tabular}%
  \label{tab:redundancy_check}%
\end{table}%

Table~\ref{tab:redundancy_check} clearly illustrates that the \texttt{F} configurations, especially $\texttt{FO}$, perform very poorly, with significantly fewer instances optimally solved within the time limit than all other IDG strategies, under all redundancy check strategies. From column \texttt{red}(\%) we understand that optimal follower solutions do not lead to more redundancies compared to greedy solutions. Under \texttt{S} and \texttt{B} configurations the redundancy ratio is higher, and most of the redundancies already stem from solving~\eqref{eq:NewDeltayHat_1} and~\eqref{eq:NewDeltayHat_2} that determine $\Delta \hat y$ to maximize redundancy, which is supported by higher \texttt{red}(\%) and lower $\texttt{red}_{\texttt{RL}}(\%)$ values. 
As the redundancy checking effort increases, both the number of generated DCs and the number of \BC nodes decrease significantly, while the number of instances solved to optimality remains largely unchanged for \texttt{S} and \texttt{B}. The most aggressive setting, \texttt{RL}, attains good final gaps with substantially fewer DCs, suggesting that the cuts become stronger even when only a relatively small number of additional nontrivial redundancies are detected. Based on these experiments, we designate \texttt{SO} and \texttt{BO}, along with the \texttt{RL} option, as the most reliable configurations. We therefore use these two settings in the subsequent experiments and analyze their performance when combined with additional components.

Next, we investigate the impact of the heuristic improving-direction pool approach to cut generation, and of adding multiple DCs for the same bilevel-infeasible solution, whenever possible.
In Table~\ref{tab:heuristic_multi} we report the following additional measures: the number of DCs separated at the root node (\texttt{nRCut}), which are also counted within \texttt{nICut} and \texttt{nFCut}, the final gap with respect to the best-known upper-bound (\texttt{Gap*}), the root gap (\texttt{RGap}), the root gap with respect to the best-known upper-bound (\texttt{RGap*}), the time needed to solve the IDG problem ($t_\Delta$), the time spent for checking disjunct redundancy ($t_R$), and the additional time needed to solve the separation problem \eqref{eq:CGLP} ($t_S$). \texttt{RGap} is computed as \texttt{Gap} using the incumbent value and the dual bound at the root node, and the starred measures use the best-known upper bound instead of the incumbent value.

Notably in Table~\ref{tab:heuristic_multi}, \texttt{Gap} and \texttt{RGap} are substantially larger than their starred counterparts (\texttt{Gap*} and \texttt{RGap*}) because they are computed solely with respect to the incumbent of the current run, whereas the starred measures capture the actual quality of the dual bound relative to the best-known upper-bound.
From an algorithmic perspective, the pool-based heuristic separation procedure (\texttt{H}) dramatically lowers IDG problem solution times ($t_\Delta$) by avoiding the need to always solve~\eqref{eq:NewDeltayHat_1} or~\eqref{eq:NewDeltayHat_2}. Since many potential improving directions are available, we permit aggressive cut separation at the root node, which raises \texttt{nRCut}. In the \texttt{M} variant, the multiple-DC separation procedures strengthens the node LP relaxation much more aggressively, achieving over a 70\% decrease in the number of \BC nodes (\texttt{nBCN}) for both \texttt{SO} and \texttt{BO}. Although \texttt{M} solves one fewer instance with \texttt{SO}, it solves four more instances with \texttt{BO}. To have both IDG strategies under identical settings, we keep \texttt{M} and consider \texttt{SO-RL-H-M} and \texttt{BO-RL-H-M} as our best configurations, which we henceforth denote by \texttt{S+} and \texttt{B+}, respectively.
%

% Table generated by Excel2LaTeX from sheet 'summary'
\begin{table}[tbh]
  \centering \footnotesize
  \caption{Results over 414 feasible instances - heuristic cut generation and multiple cuts}
    \begin{tabular}{lrrrrrrrrrrrrr}
    \hline
\multicolumn{1}{l}{\texttt{Setting}} &\multicolumn{1}{c}{\texttt{nSol}} & \multicolumn{1}{c}{$t$} & \multicolumn{1}{c}{\texttt{Gap}} & \multicolumn{1}{c}{\texttt{RGap}} & \multicolumn{1}{c}{\texttt{Gap*}}  & \multicolumn{1}{c}{\texttt{RGap*}} & \multicolumn{1}{c}{\texttt{nBCN}} & \multicolumn{1}{c}
{\texttt{nRCut}} & \multicolumn{1}{c}
{\texttt{nICut}} & \multicolumn{1}{c}{\texttt{nFCut}} & \multicolumn{1}{c}{$t_\Delta$} & \multicolumn{1}{c}{$t_S$} & \multicolumn{1}{c}{\texttt{$t_R$}} \\
\hline 
\texttt{S}\texttt{O-RL} & 361   & 102.3 & 8.7   & 69.7  & 4.2   & 15.1  & 2988.5 & 70.2  & 56.5  & 2568.7 & 72.5  & 16.1  & 9.6\\
\texttt{S}\texttt{O-RL-H} & 367   & 93.2  & 8.9   & 69.0  & 4.0   & 14.8  & 3176.4 & 151.8 & 55.1  & 2837.9 & 19.2  & 53.6  & 15.7\\
\texttt{S}\texttt{O-RL-H-M} & 366   & 96.1  & 9.6   & 71.0  & 4.0   & 15.5  & 862.7 & 158.5 & 54.5  & 2364.6 & 22.1  & 55.5  & 16.5\\
\texttt{B}\texttt{O-RL} & 360   & 112.6 & 9.0   & 70.3  & 4.2   & 15.4  & 2552.4 & 73.2  & 52.8  & 2171.5 & 87.6  & 13.5  & 8.2\\
\texttt{B}\texttt{O-RL-H} & 363   & 98.6  & 9.2   & 69.9  & 4.1   & 15.4  & 3282.9 & 148.2 & 55.9  & 2911.8 & 23.3  & 55.4  & 15.9\\
\texttt{B}\texttt{O-RL-H-M} & 367   & 99.7  & 8.6   & 70.2  & 4.1   & 15.5  & 914.4 & 144.7 & 40.1  & 2562.1 & 26.7  & 53.3  & 17.5\\
\hline
\end{tabular}%
  \label{tab:heuristic_multi}%
\end{table}%

\paragraph{Comparison with a state-of-the-art method}

Lastly, we conduct a performance analysis with respect to the benchmark method \texttt{SOCP} proposed by \cite{Gaar-et-al:2024}. To this end, we focus on the two best-performing configurations \texttt{S+} and \texttt{B+}. The results are first illustrated by cumulative distribution graphs of solution times and optimality gaps in Figure~\ref{fig:overall_comparison} over all instance sets (excluding \texttt{QBCovMulti}, for which no ready-to-use implementation of \texttt{SOCP} is available), and then reported separately for each instance set in Table~\ref{tab:SOCP}. In both, we restrict our attention to the instances that are proven to be feasible by at least one of the three methods.
 The time profile in Figure~\ref{fig:overall_comparison} shows that \texttt{S+} and \texttt{B+} solve approximately $75\%$ of instances within the first 50 seconds and plateau near $85\%$ by the time limit, whereas \texttt{SOCP} solves around 
70\% of instances optimally within the time limit. Instances for which no feasible solution is found within the time limit are assigned a gap of $100\%$, accounting for the sharp jump at the right end of the gap profile. Since most runs have no incumbent at the root node, the root gap profile is based on \texttt{RGap*}, which measures the strength of the root bound independently of the root incumbent. Across all three criteria, \texttt{S+} and \texttt{B+} perform nearly identically with no consistent dominance of one over the other.
\begin{figure}[tbh]
    \centering
    \includegraphics[width=0.45\linewidth]{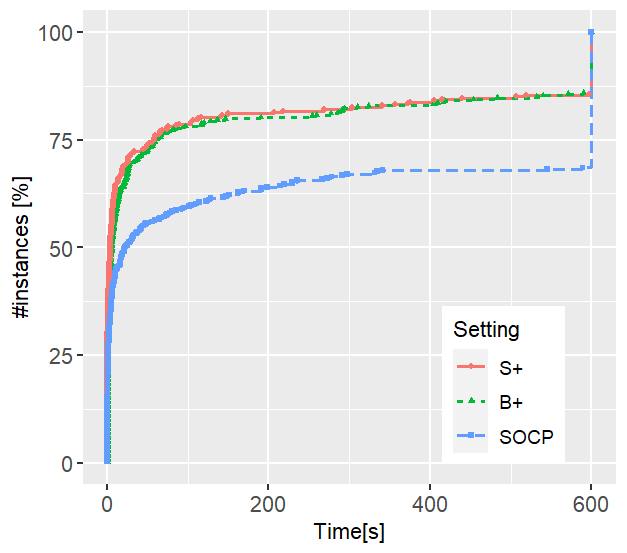}
\includegraphics[width=0.45\linewidth]{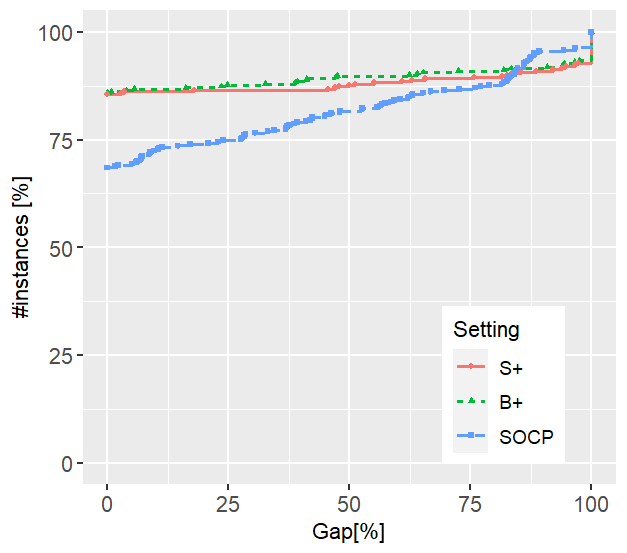}
\includegraphics[width=0.45\linewidth]{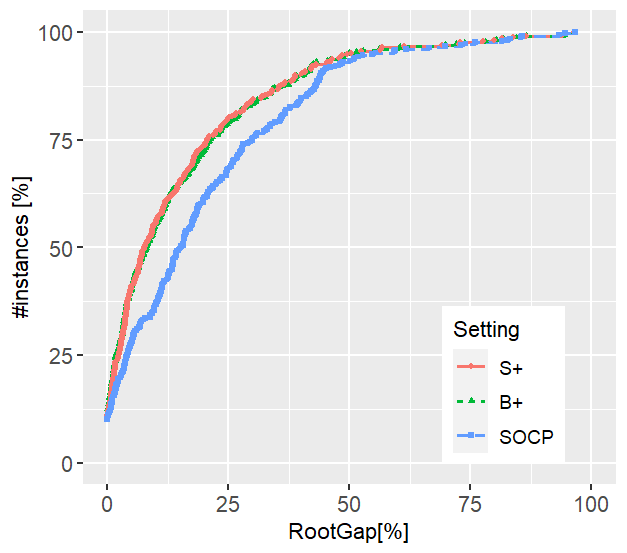}
    \caption{Our best settings compared to the state-of-the-art method}
    \label{fig:overall_comparison}
\end{figure}

% Table generated by Excel2LaTeX from sheet 'summary'
\begin{table}[tbh]
    \setlength{\tabcolsep}{4pt}
  \centering \footnotesize
  \caption{Our best settings compared to the state-of-the-art method: average results over 334 feasible instances}
    \begin{tabular}{lllrrrrrrrrrrrr}
    \hline
\texttt{Instances} & \multicolumn{1}{c}{\texttt{Setting}} & \multicolumn{1}{c}{\texttt{nSol}} & \multicolumn{1}{c}{$t$} & \multicolumn{1}{c}{\texttt{Gap}} & \multicolumn{1}{c}{\texttt{RGap}} & \multicolumn{1}{c}{\texttt{Gap*}}  & \multicolumn{1}{c}{\texttt{RGap*}} & \multicolumn{1}{c}{\texttt{nBCN}} & \multicolumn{1}{c}
{\texttt{nRCut}} & \multicolumn{1}{c}
{\texttt{nICut}} & \multicolumn{1}{c}{\texttt{nFCut}} & \multicolumn{1}{c}{$t_\Delta$} & \multicolumn{1}{c}{$t_S$} & \multicolumn{1}{c}{\texttt{$t_R$}} \\
\hline
    \multicolumn{1}{l}{\texttt{QBCov-1}} & \texttt{SOCP}  & 38/40 & 67.5  & 1.2   & 80.8  & 0.8   & 20.5  & 6989.8 & 13.9  & 278.4 & 4382.6 & 12.8  & 40.3  & 11.3 \\
          & \texttt{S+}   & 40/40 & 2.6   & 0.0   & 55.8  & 0.0   & 9.0   & 78.2  & 16.1  & 30.1  & 36.5  & 0.9   & 0.2   & 0.2 \\
          &\texttt{B+} & 40/40 & 4.2   & 0.0   & 53.5  & 0.0   & 8.9   & 105.5 & 13.8  & 25.6  & 63.8  & 2.2   & 0.2   & 0.2 \\
    \hline
    \multicolumn{1}{l}{\texttt{QBCov-2}} & \texttt{SOCP}  & 35/40 & 132.3 & 4.2   & 79.2  & 2.7   & 24.9  & 9143.4 & 14.5  & 427.4 & 6052.8 & 18.4  & 78.7  & 31.1 \\
          & \texttt{S+}   & 40/40 & 8.9   & 0.0   & 60.4  & 0.0   & 15.3  & 385.3 & 17.6  & 58.2  & 375.2 & 5.0   & 1.3   & 1.6 \\
          & \texttt{B+} & 40/40 & 12.0  & 0.0   & 65.6  & 0.0   & 15.2  & 389.5 & 16.1  & 51.3  & 421.9 & 8.0   & 1.5   & 1.7 \\
    \hline
    \multicolumn{1}{l}{\texttt{QBMKP-1}} & \texttt{SOCP}  & 54/70 & 157.5 & 14.4  & 67.8  & 6.2   & 16.8  & 12125.7 & 22.5  & 426.3 & 9762.9 & 24.8 & 92.9  & 31.9 \\
          & \texttt{S+}   & 66/70 & 62.0  & 3.6   & 67.5  & 1.8   & 16.2  & 1170.8 & 27.2  & 28.1  & 3063.8 & 14.3  & 26.7  & 19.9 \\
          & \texttt{B+} & 67/70 & 64.4  & 2.8   & 66.5  & 1.8   & 16.0  & 1141.9 & 21.1  & 25.5  & 2941.3 & 19.8  & 24.4  & 19.2 \\
    \hline
    \multicolumn{1}{l}{\texttt{QBMKP-2}} & \texttt{SOCP}  & 62/85 & 188.6 & 12.6  & 65.7  & 5.6   & 13.6  & 9852.4 & 20.7  & 409.1 & 8151.7 & 22.6 & 113.4 & 46.6 \\
          & \texttt{S+}   & 76/85 & 97.8  & 7.2   & 61.0  & 1.9   & 12.9  & 1123.0 & 22.3  & 46.8  & 2849.1 & 54.1  & 21.8  & 20.7 \\
          & \texttt{B+} & 75/85 & 100.0 & 6.5   & 63.8  & 1.9   & 12.9  & 1165.0 & 25.1  & 36.3  & 2931.5 & 56.6  & 22.2  & 20.1 \\
    \hline
    \multicolumn{1}{l}{\texttt{QBMKPInt}} & \texttt{SOCP}  & 40/99 & 375.3 & 37.9  & 89.5  & 18.0  & 25.9  & 27001.2 & 50.8  & 704.7 & 24591.7 & 56.2 & 228.2 & 67.8 \\
          & \texttt{S+}   & 64/99 & 249.0 & 31.4  & 86.5  & 13.8  & 18.5  & 1050.3 & 595.1 & 26.3  & 4416.9 & 23.6  & 189.3 & 31.2 \\
          & \texttt{B+} & 65/99 & 259.2 & 28.4  & 84.8  & 14.1  & 18.8  & 1276.6 & 541.4 & 20.2  & 5255.0 & 34.5  & 182.4 & 37.1 \\
    \hline
    \end{tabular}%
  \label{tab:SOCP}%
\end{table}%

As shown in Table~\ref{tab:SOCP}, both proposed configurations (\texttt{S+} and \texttt{B+}) consistently outperform \texttt{SOCP} across all five instance sets in terms of number of optimally solved instances, execution speed, and \BC tree size. On the \texttt{QBCov-1} and \texttt{QBCov-2} sets, both methods achieve a $100\%$ solve rate ($40/40$) with an average runtime of at most 12 seconds, whereas \texttt{SOCP} fails to solve several instances, and requires significantly longer runtimes. On the more challenging \texttt{QBMKP} sets, the advantages of our approach become even more pronounced. For instance, on \texttt{QBMKPInt}, \texttt{S+} and \texttt{B+} solve $64$ and $65$ out of $99$ instances, respectively, compared to only $40$ by \texttt{SOCP}. In general, \texttt{SOCP} separates many more cuts and leads to significantly larger \BC trees.
Finally, \texttt{S+} and \texttt{B+} exhibit highly comparable performance overall, with \texttt{S+} holding a slight runtime advantage on the \texttt{QBCov} instances.

\subsection{Results for instances with non-convex follower objective function}
\label{sec:CompNonConv}

For the BNQKP, the non-convex follower objective function turns the IDG problems~\eqref{eq:NewDeltayHat_1} and~\eqref{eq:NewDeltayHat_2} of \texttt{S} and \texttt{B} into computationally heavy non-convex MINLPs. Conversely, the  IDG strategy \texttt{F} reduces to a pure 0-1 quadratic integer program (QIP) with non-convexity restricted to the objective function. 
Furthermore, because BNQKP lacks leader constraints~\eqref{MxNy}, solving the follower problem to optimality in \texttt{FO} guarantees a bilevel-feasible solution that can immediately serve as an incumbent. In contrast, although the greedy variant \texttt{FG} achieved optimal solutions with a similar frequency in preliminary experiments not shown here, it frequently fails to produce valid incumbents for instances that terminate by timeout. This leads to misleadingly biased average upper-bounds (\texttt{UB}) and gaps. To ensure a consistent and reliable comparative evaluation, we therefore adopt \texttt{FO} as our main configuration for the current instance type, along with the \texttt{RM} strategy for redundancy checks based on the results in Table~\ref{tab:redundancy_check}. The heuristic cut generation \texttt{H} and the addition of multiple cuts \texttt{M} are likewise activated.

We evaluate \texttt{FO} against another benchmark method, \texttt{MIX++} \citep{fischetti2017new}, across 180 BNQKP instances disaggregated over three matrix density levels $p \in \{10\%, 50\%, 100\%\}$. The previously used method from the literature, namely \texttt{SOCP} by \cite{Gaar-et-al:2024}, is inapplicable due to non-convexity of $q(y)$, which requires \texttt{MIX++} to rely on McCormick linearizations.
The average results are presented in Table~\ref{tab:BNQKP}.
Overall, \texttt{FO} outperforms \texttt{MIX++}, solving 109 out of 180 instances with an average runtime of $279.3$ seconds and a gap of $13.0\%$, compared to $100$ solved instances, $351.5$ seconds runtime, and an $18.8\%$ gap for \texttt{MIX++}. The follower problem matrix density $p$ highlights a clear structural trade-off. At low density ($p = 10\%$), \texttt{MIX++} performs better ($51$ vs.\ $38$ solved) because sparse interaction matrices keep McCormick linearizations compact, which also explains its better overall \texttt{Gap*}. The high \texttt{RGap} of \texttt{FO} at this density stems from weak root incumbents rather than a weak root bound, as indicated by the similar \texttt{RGap*} values for both methods. However, as density increases to $50\%$ and $100\%$, the proliferation of cross-product terms causes \texttt{MIX++}'s performance to degrade sharply, with runtimes increasing to $390.0$\,s and $460.3$\,s and solved instances dropping to $27$ and~$22$, respectively. In contrast, \texttt{FO} scales far more robustly on dense instances, maintaining stable runtimes ($\sim 276$\,s--$288$\,s) and significantly lower optimality gaps ($\sim 8.5\%$). 

% Table generated by Excel2LaTeX from sheet 'summary-QINBKP'
\begin{table}[tbh]
    \setlength{\tabcolsep}{4pt}
  \centering \footnotesize
  \caption{Results over 180 BNQKP instances}
    \begin{tabular}{rrrrrrrrrrrrrrr}
     \hline
    \multicolumn{1}{l}{\texttt{Method}} & $p$ & \multicolumn{1}{c}{\texttt{nSol}} & \multicolumn{1}{c}{$t$} & \multicolumn{1}{c}{\texttt{LB}} & \multicolumn{1}{c}{\texttt{UB}} & \multicolumn{1}{c}{\texttt{rootLB}} & \multicolumn{1}{c}{\texttt{Gap}} & \multicolumn{1}{c}{\texttt{Gap*}} & \multicolumn{1}{c}{\texttt{RGap}}  & \multicolumn{1}{c}{\texttt{RGap*}} & \multicolumn{1}{c}{\texttt{nBCN}} & \multicolumn{1}{c}{$t_\Delta$} & \multicolumn{1}{c}{$t_S$} & \multicolumn{1}{c}{\texttt{$t_R$}} \\
    \hline
    \multicolumn{1}{l}{\texttt{MIX++}} & 10    & 51/60    & 204.2 & -4896.5 & -4566.4 & -6079.5 & 12.8  & 7.4   & 44.0  & 37.4  & 827.5 &   -    &   -    &  - \\
          & 50    & 27/60    & 390.0 & -5760.2 & -4735.3 & -6248.1 & 22.4  & 9.8   & 35.6  & 22.6  & 360.2 &    -   &   -    &  - \\
          & 100   & 22/60    & 460.3 & -6066.4 & -4993.5 & -6255.4 & 21.2  & 12.0  & 26.8  & 17.3  & 163.5 &  -     &    -   &  - \\
    \cline{2-15}
    &Total &	100/180	&351.5 &	-5574.4&	-4765.1&	-6194.3&	18.8&	9.7&	35.5&	25.8&	450.4&-&-&- \\
    \hline
    \multicolumn{1}{l}{\texttt{FO-RM-H-M}} & 10    & 38/60    & 272.9 & -5527.3 & -4627.0 & -6130.9 & 22.0  & 21.2  & 151.4 & 38.3  & 1289.8 & 117.8 & 139.3 & 13.2 \\
          & 50    & 37/60    & 276.4 & -5687.0 & -5191.8 & -6144.4 & 8.4   & 8.4   & 36.9  & 19.8  & 537.2 & 214.2 & 56.0  & 5.3 \\
          & 100   & 34/60    & 288.5 & -5875.3 & -5340.2 & -6155.5 & 8.5   & 8.4   & 26.3  & 14.9  & 143.5 & 263.6 & 23.2  & 1.4 \\
    \cline{2-15}
          &Total & 109/180   & 279.3 & -5696.5 & -5053.0 & -6143.6 & 13.0  & 12.7  & 71.5  & 24.3  & 656.8 & 198.5 & 72.8  & 6.6 \\
    \hline
    \end{tabular}%
  \label{tab:BNQKP}%
\end{table}%

\section{Summary and outlook}\label{sec:conclusion}

In this paper, we study bilevel optimization problems where all variables are integer, all constraints and the leader objective function are linear, and the follower objective function is a quadratic function which can be non-convex. We develop a \BC algorithm, which uses \DCs that exclude bilevel-infeasible solutions. These \DCs are based on bilevel-free sets which generalize the set introduced in \citet{watermelon,fischetti2017new} for the case with linear follower objective function. Compared to the \DC approach of \citet{Gaar-et-al:2024}, next to allowing for the non-convex case, which the bilevel-free set used in \citet{Gaar-et-al:2024} could not handle, our \DCs can be obtained by solving a cut generating LP based on an improving direction rather than a cut generating SOCP. We discuss conditions under which disjuncts are redundant and we propose and computationally compare several strategies to generate improving directions and to separate \DCs efficiently, including a pool-based heuristic scheme.

Our computational study confirms the practical value of the presented approach. On instances with a convex follower objective function, our best configurations consistently outperform the SOCP-based approach of \citet{Gaar-et-al:2024} across all five benchmark instance sets considered: On \texttt{QBCov-1} and \texttt{QBCov-2}, our best setting solves all 80 instances to optimality with an average runtime of at most 12 seconds, whereas the approach of \citet{Gaar-et-al:2024} fails to solve several instances within the time limit of 600 seconds. On the more challenging \texttt{QBMKPInt} set, our best setting solves 65 out of 99 instances, compared to only 40 for the approach of \citet{Gaar-et-al:2024}.

To assess the performance for follower problems with non-convex objective function, we introduce a new instance set \texttt{BNQKP} by modifying existing bilevel knapsack instances. Here, we compare with a state-of-the-art mixed-integer bilevel linear solver from \citet{fischetti2017new}, which can be applied to these instances after linearizing the non-convex objective function. Our best setting solves 109 of the 180 instances compared to 100 instances for the approach of \citet{fischetti2017new}.

Future work could address several directions. First, different improving directions can give rise to disjunctive cuts of different strength for the same bilevel-infeasible solution. A formal dominance analysis (in the spirit of the analysis \citet{Battista-Ralphs:2025} carry out for the mixed-integer bilevel linear case) between the cuts derived from two improving directions, and how such a result could be exploited algorithmically, e.g., to guide the selection among improving-direction generation strategies, is left for future research. Moreover, further redundancy checks and speed-ups for all parts of the separation procedure should also be an interesting topic. Finally, it can be worthwhile to investigate whether the improving-direction bilevel-free sets developed in our work extend to follower objective functions beyond the quadratic case.

\section*{Acknowledgments}

J. Lee was supported in part by U.S. ONR grant N00014-24-1-2694.
%\clearpage

\bibliographystyle{plainnat}

\bibliography{ICDC}

\end{document}